\DeclareMathAlphabet{\mathpzc}{OT1}{pzc}{m}{it}
\newtheorem{theorem}{Theorem}[section]
\newtheorem{proposition}[theorem]{Proposition}
\newtheorem{lemma}[theorem]{Lemma}
\theoremstyle{definition}
\newtheorem{definition}[theorem]{Definition}
\theoremstyle{remark}
\def\varle{\leqslant}
\newcommand{\CA}{{\mathcal A}}
\newcommand{\CC}{{\mathcal C}}
\newcommand{\CI}{{\mathcal I}}
\newcommand{\CJ}{{\mathcal J}}
\newcommand{\CO}{{\mathcal O}}
\newcommand{\CW}{{\mathcal W}}
\newcommand{\fh}{{{\mathfrak h}}} 
\newcommand{\fp}{{{\mathfrak p}}} 
\newcommand{\fg}{{{\mathfrak g}}} 
\newcommand{\fb}{{{\mathfrak b}}}
\newcommand{\fn}{{{\mathfrak n}}} 
\newcommand{\fm}{{{\mathfrak m}}}
\newcommand{\fhd}{\fh^\star}
\newcommand{\hCW}{{\widehat\CW}}
\newcommand{\hCO}{{\widehat\CO}}
\newcommand{\hfh}{{\widehat\fh}}
\newcommand{\hfg}{{\widehat\fg}}
\newcommand{\hfb}{{\widehat\fb}}
\newcommand{\hfn}{{\widehat\fn}}
\newcommand{\hS}{{\widehat S}}
\newcommand{\hR}{{\widehat R}}
\newcommand{\hPi}{{\widehat \Pi}}
\newcommand{\hfhd}{\widehat{\fh}^\star}
\newcommand{\tS}{{\widetilde{S}}}
\newcommand{\tQ}{{\widetilde{Q}}}
\newcommand{\DC}{{\mathbb C}}
\newcommand{\DZ}{{\mathbb Z}}
\newcommand{\DK}{{\mathbb K}}
\newcommand{\DN}{{\mathbb N}}
\newcommand{\Hom}{{\operatorname{Hom}}}
\newcommand{\Res}{{\operatorname{Res}}}
\newcommand{\catmod}{{\operatorname{-mod}}}
\DeclareMathOperator{\cha}{\mathrm{ch}}
\newcommand{\im}{{\operatorname{im}}}
\newcommand{\ol}{\overline}
\newcommand{\ul}{\underline}
\newcommand{\id}{{\operatorname{id}}}
\newcommand{\re}{{\operatorname{re}}}
\newcommand{\crit}{{\operatorname{crit}}}
\newcommand{\GL}{{\operatorname{GL}}}
\newcommand{\rCO}{{\ol\CO}}
\newcommand{\rP}{{\ol P}}
\newcommand{\rnabla}{{\ol\nabla}}
\newcommand{\rDelta}{{\ol\Delta}}
\newcommand{\tCO}{{\ol{\CO}}}
\newcommand{\comment}[1]{}
\begin{document}

\pagenumbering{arabic}
\title[Critical level representations]{On the restricted projective objects in the affine category $\CO$ at the critical level}
\author[]{Peter Fiebig$^\ast$}
\thanks{$^\ast$partially supported by a grant of the Landesstiftung Baden--W\"urttemberg and by the DFG-Schwerpunkt 1388}

\begin{abstract}  This article gives both an overview and  supplements the articles \cite{AF08} and \cite{AF09} on the critical level category $\CO$ over an affine Kac--Moody algebra. In particular, we study the restricted projective objects and review the restricted reciprocity and linkage principles.
\end{abstract}
\address{Department Mathematik, Universit{ä}t Erlangen-N\"urnberg, 91054 Erlangen, Germany}
\email{fiebig@mi.uni-erlangen.de}
\maketitle
%\tableofcontents

\section{Introduction}
The main objective of this paper is to introduce to and to complement the results of the papers \cite{AF08} and \cite{AF09} on the critical level representation theory of affine Kac--Moody algebras that provide the first steps in a research project, joint with Tomoyuki Arakawa, whose main motivation is the determination of the critical level simple highest weight characters. 

There are at least two (essentially different) approaches to character problems in Lie theory. The first (and slightly more classical), is due to Beilinson and Bernstein and utilizes a {\em localization functor}, i.e.~a functor that realizes representations of a Lie algebra as $D$-modules on a convenient algebraic variety. This functor was initially introduced in \cite{BB81}  in order to determine the characters of simple highest weight representations of semisimple Lie algebras. Later, Kashiwara and Tanisaki used a similar functor in the case of symmetrizable Kac--Moody algebras (cf.~\cite{KT}). For modular Lie algebras, i.e.~Lie algebras over a field of positive characteristic, a version of the localization functor is one of the main ingredients in the work of Bezrukavnikov et al.~(cf.~\cite{BMR08}). Recently, Frenkel and Gaitsgory used it in their formulation of the local geometric Langlands conjectures and their study of the critical level representation theory of affine Kac--Moody algebras (cf.~\cite{FG06}). 

The second approach goes back to Soergel (cf.~\cite{Soe90}). Here, the main idea is to link the representation theory to the topology of an algebraic variety (most notably to the category of perverse sheaves) by an intermediate ``combinatorial'' category. These combinatorial categories often have a slightly artificial flavour, examples of which are categories of Soergel bimodules, of sheaves on moment graphs and the highly complicated category studied in \cite{AJS94}. However, it turned out that they can also play a significant role outside their originial habitat, for example in knot theory or for $p$-smoothness questions in complex algebraic geometry (cf.~\cite{FW10}).

The first example of a relation of this second type  appears in \cite{Soe90} in the case of semisimple complex Lie algebras. The paper \cite{Fie06}  contains the generalization to the symmetrizable, non-critical Kac--Moody case. In the same spirit, the paper \cite{Fie07} links restricted representations of a modular Lie algebra as well as representations of the small quantum group to {\em parity sheaves} on affine flag varieties (one of the main ingredients for this is the combinatorial description in \cite{AJS94}). In our research project we hope to establish a similar result for the critical level representation theory of an affine Kac--Moody algebra. 

Note that both approaches outlined above potentially are sufficient to determine representation theoretic data such as characters, but it is only after one takes them together that they release their full potential. For example, the celebrated Koszul duality for the category $\CO$ of a semisimple Lie algebra is constructed by combining  the Beilinson--Bernstein localization as well as the Soergel approach (cf.~\cite{BGS96}). 

In the following we review our approach and state the main results of the articles \cite{AF08,AF09}. Moreover, we complement these articles by a new and simplified treatment of projective covers in deformed versions of the affine category $\CO$.

\section{Affine Kac--Moody algebras}

We fix a finite dimensional simple complex Lie algebra  $\fg$ and we denote by $k\colon \fg\times\fg\to\DC$ its Killing form. First, we  explain the main steps of the construction of the affine Kac--Moody algebra associated with $\fg$ (for more details, see \cite{Kac}).

The {\em loop algebra}  associated with $\fg$ is the Lie algebra with underlying vector space $\fg\otimes_\DC\DC[t,t^{-1}]$ that is endowed with the $\DC[t,t^{-1}]$-bilinear extension of the bracket of $\fg$. So we have
$$
[x\otimes t^n,y\otimes t^m]=[x,y]\otimes t^{m+n}
$$
for $x,y\in\fg$ and $m,n\in\DZ$. The loop algebra has an up to isomorphism unique non-split central extension $\widetilde \fg$ of rank one. Its underlying vector space is $\fg\otimes_\DC\DC[t,t^{-1}]\oplus\DC K$ and the  bracket is given by
\begin{align*}
[x\otimes t^n,y\otimes t^m]&=[x,y]\otimes t^{m+n}+n\delta_{m,-n} k(x,y) K,\\
[K,\widetilde \fg] &=\{0\}
\end{align*}
(here $ \delta_{a,b}$ denote the Kronecker symbol). 

In order to obtain the affine Kac--Moody algebra $\hfg$ associated with $\fg$ we add the outer derivation operator $D=t\frac{\partial}{\partial t}$ to $\widetilde \fg$. So we obtain  the vector space $\fg\otimes_\DC\DC[t,t^{-1}]\oplus\DC K\oplus \DC D$ with bracket
\begin{align*}
[x\otimes t^n,y\otimes t^m]&=[x,y]\otimes t^{m+n}+n\delta_{m,-n} k(x,y) K,\\
[K, \hfg] &=\{0\},\\
[D,x\otimes t^n]&=nx\otimes t^n
\end{align*}
for $x,y\in\fg$, $m,n\in\DZ$. Note that $\fg$ naturally appears as a subalgebra in $\hfg$ via the map $x\mapsto x\otimes 1$.

\subsection{Roots and coroots}
We fix a Cartan subalgebra $\fh$ in $\fg$ and a Borel subalgebra $\fb\subset\fg$ that contains $\fh$. Then the corresponding Cartan and Borel subalgebras in $\hfg$ are
\begin{align*}
\hfh&:=\fh\oplus\DC K\oplus\DC D,\\
\hfb&:=\fg\otimes t\DC[t]\oplus\fb\oplus\DC K\oplus\DC D.
\end{align*}

Let us denote by $R\subset \fhd=\Hom_\DC(\fh,\DC)$ the root system of $\fg$ with respect to $\fh$ and by $R^+\subset R$ the subset of positive roots, i.e.~the set of roots of $\fb$. Let $\fg=\fh\oplus\bigoplus_{\alpha\in R}\fg_\alpha$ be the root space decomposition. The coroot associated with $\alpha\in R$ is the unique element $\alpha^\vee\in[\fg_\alpha,\fg_{-\alpha}]$ with the property $\langle \alpha,\alpha^\vee\rangle=2$. 

Note that the dual of the projection $\hfh\to\fh$ along the decomposition $\hfh=\fh\oplus\DC K\oplus \DC D$ allows us to view $\fh$ as a subset of $\hfhd$. 
We denote by $\delta\in\hfhd$ the unique element with 
\begin{align*}
\delta(\fh\oplus\DC K)&= \{0\},\\
\delta(D)&= 1.
\end{align*}
Then the set of roots of $\hfg$ with respect to $\hfh$ is 
$\hR=\hR^{\re}\cup\hR^{\im}$, where
\begin{align*}
\hR^{\re}&=\{\alpha+n\delta\mid \alpha\in R, n\in\DZ\},\\
\hR^{\im}&=\{n\delta\mid n\in\DZ,n\ne 0\}.
\end{align*}
The first set is called the set of {\em real roots}, the second set is called the set of {\em imaginary roots}. 
The corresponding root spaces are
\begin{align*}
\hfg_{\alpha+n\delta}=\fg_\alpha\otimes t^n,\\
\hfg_{n\delta}=\fh\otimes t^n.
\end{align*}
Let $\Pi\subset R^+$ be the  set of simple roots. The set of {\em simple affine roots} is then $\hPi= \Pi\cup\{-\gamma+\delta\}$, where $\gamma \in R^+$ is the highest root. The set of {\em positive affine roots} (i.e., the set of roots of $\hfb$) is $\hR^+=R^+\cup\{\alpha+n\delta\mid  \alpha\in R, n>0\}\cup\{n\delta\mid n>0\}$.

For a real root $\alpha+n\delta$ the space $[\hfg_{\alpha+n\delta},\hfg_{-(\alpha+n\delta)}]$ is a one-dimensional subspace of $\hfh$. The coroot associated to $\alpha+n\delta$ is the unique element $(\alpha+n\delta)^\vee\in[\hfg_{-(\alpha+n\delta)}, \hfg_{\alpha+n\delta}]$ with the property $\langle\alpha+n\delta,(\alpha+n\delta)^\vee\rangle=2$. Explicitely, this is 
$$
(\alpha+n\delta)^\vee=\alpha^\vee+\frac{2n}{k(\alpha,\alpha)}K.
$$
Here we denote by $k\colon \fhd\times\fhd\to\DC$ the bilinear form induced by the Killing form.

\subsection{The affine Weyl group}
To a real affine root $\alpha+n\delta$ we associate the following reflection on $\hfhd$:
$$
s_{\alpha+n\delta}(\lambda)=\lambda-\langle\lambda, (\alpha+n\delta)^\vee\rangle(\alpha+n\delta).
$$
The {\em affine Weyl group} is the subgroup $\hCW\subset\GL(\hfhd)$ that is generated by all reflections $s_{\alpha+n\delta}$ with $\alpha+n\delta\in \hR^{\re}$.

We need the following shifted, non-linear action of $\hCW$ on $\hfhd$. Let us choose an element $\rho\in\hfhd$ with the property that $\rho(\alpha^\vee)=1$ for each simple affine coroot $\alpha^\vee$. Note that $\rho$ is not uniquely defined, as the simple coroots do not generate $\hfh$. Instead, $\rho+x\delta$ would do equally well for each $x\in\DC$. However, as $\delta$ is stabilized by the action of $\hCW$, the {\em dot-action}
$$
w.\lambda=w(\lambda+\rho)-\rho
$$
is independent of the choice. It fixes the line $-\rho+\DC\delta$.

\subsection{Simple highest weight characters}

Let $M$ be a $\hfg$-module. For any linear form $\lambda\in\hfhd$ we denote by 
$$
M_\lambda=\{m\in M\mid H.m=\lambda(H)m\text{ for all $H\in\hfh$}\}
$$ 
the eigenspace of the $\hfh$-action on $M$ with eigenvalue $\lambda$. 

The set $\hfhd$ carries a natural partial order (with respect to our choice of $\fb$): we set $\lambda\ge \mu$ if and only if $\lambda-\mu$ can be written as a sum of positive roots, i.e.~if and only if $\lambda-\mu\in\DZ_{\ge 0}\hR^+$.

It is well-known that there is an up to isomorphism unique simple $\hfg$-module $L(\lambda)$ {\em with highest weight $\lambda$}, i.e.~that is generated by its $\lambda$-weight space and has the property that $L(\lambda)_\mu\ne 0$ implies $\lambda\ge \mu$. Then the complex dimension of $L(\lambda)_\mu$ is finite, hence we can consider the formal sum
$$
\cha L(\lambda)=\sum_{\mu\le \lambda} \dim_\DC L(\lambda)_\mu e^{\mu}
$$
as an element in the formal completion (with respect to $\varle$) of the group algebra of the additive group $\hfhd$. 

Now, if $\lambda$ is integral and dominant (i.e., if $\langle\lambda,\alpha^\vee\rangle\in\DZ_{\ge 0}$ for all simple affine roots $\alpha$), then $\cha L(\lambda)$ is given by the Weyl-Kac character formula (cf.~\cite{Kac}). More generally, if $\lambda$ is non-critical (i.e., if $\langle\lambda+\rho,K\rangle\ne 0$), then $\cha L(\lambda)$ is given by an appropriate version of the formula conjectured by Kazhdan and Lusztig (cf.~\cite{KT}). In the case that $\lambda$ is critical, Feigin and Frenkel conjectured a formula for $\cha L(\lambda)$ (see \cite{AF08}). This conjecture, however, is yet unproven in general. In the integral dominant critical case, the conjecture follows from the results in  \cite{FG09}. The objective of this paper is to supplement the papers \cite{AF08,AF09} that provide a first step in a program that aims to solve the character problem at the critical level.

\subsection{The  category $\hCO$}

It is most convenient to introduce now a categorical framework for the above mentioned  problem.

\begin{definition}
\begin{enumerate} 
\item $M$ is called a {\em weight module} (with respect to $\hfh$), if $\hfh$ acts semisimply, i.e.~if $M=\bigoplus_{\lambda\in\hfhd}M_\lambda$. 
\item $M$ is called {\em locally $\hfb$-finite}, if each element of $M$ is contained in a finite dimensional $\hfb$-submodule.  
\end{enumerate}
\end{definition}

We denote by $\hCO$ the full subcategory of all $\hfg$-modules  which are weight modules and on which  $\hfb$ acts locally finitely. This is an abelian subcategory of $\hfg\catmod$. Each simple object $L(\lambda)$ is contained in $\hCO$, as is, more generally, each module with highest weight. For $\lambda\in\hfhd$ the {\em Verma module} with highest weight $\lambda$ is defined as
$$
\Delta(\lambda)=U(\hfg)\otimes_{U(\hfb)} \DC_\lambda,
$$
where $\DC_\lambda$ is the simple $\hfb$-module corresponding to the character $\hfb\to\hfh\stackrel{\lambda}\to\DC$, where the map on the left is the  homomorphism of Lie algebras that is left invers to the inclusion $\hfh\subset\hfb$. 
The {\em dual Verma module} $\nabla(\lambda)$ is the restricted dual of $\Delta(\lambda)$, i.e.~it is the set of $\hfh$-finite vectors in the representation of $\hfg$ that is dual to $\Delta(\lambda)$. Each $\nabla(\lambda)$ is contained in $\hCO$ as well. 

\subsection{The level}
For a $\hfg$-module $M$ and a complex number $k$ we define 
$$
M_k:=\{m\in M\mid K.m=km\},
$$ 
the eigenspace of the action of the central element $K$ in $\hfg$ with eigenvalue $k$. Clearly, each  $M_k$ is a submodule in $M$. A $\hfg$-module $M$ is said to be {\em of level $k$} if $M=M_k$.

 If $M$ is a weight module, then $K$ acts semisimply on $M$, so $M=\bigoplus_{k\in\DC} M_k$. In fact, in this case we have $M_k=\bigoplus_{\lambda\in\hfhd,\lambda(K)=k} M_\lambda$. If we denote by $\hCO_k$ the full subcategory of $\hCO$ that contains all modules of level $k$, then the functor
\begin{align*}
\prod_{k\in\DC}\hCO_k&\to\hCO\\
\{M_k\}_{k\in\DC}&\mapsto\bigoplus_{k\in\DC}M_k
\end{align*}
is an equivalence of categories.

\subsection{A graded structure}
In the following we construct a {\em grading functor} $T$ on $\hCO$ (i.e., an autoequivalence $T\colon \hCO \to\hCO$). 
Let us consider the simple $\hfg$-module $L(\delta)$ with highest weight $\delta$. It is one-dimensional. In fact, the algebra $\widetilde\fg=[\hfg,\hfg]=\fg\otimes\DC[t,t^{-1}]\oplus\DC K$ acts trivially on $L(\delta)$, while $D\in\hfg$ acts as the identity operator. 
Recall the usual tensor structure on the category of $\hfg$-modules: If $M$ and $N$ are $\hfg$-modules, then $M\otimes_\DC N$ becomes a $\hfg$-module with the action determined by $X.(m\otimes n)=(X.m)\otimes n+m\otimes X.n$ for $X\in\hfg$ and $m\in M$, $n\in N$. 

We define the functor
\begin{align*}
T\colon \hfg\catmod&\to\hfg\catmod,\\
M&\mapsto M\otimes_\DC L(\delta)
\end{align*}
with the obvious action on morphisms. It is an equivalence with inverse $T^{-1}\colon M\mapsto M\otimes_\DC L(-\delta)$, and it preserves weight modules, as $(TM)_{\lambda}=M_{\lambda-\delta}\otimes_\DC L(\delta)$ for each $\hfg$-module $M$ and $\lambda\in\hfhd$. Moreover, if $N\subset M$ is a $\hfb$-submodule, then $N\otimes_\DC L(\delta)\subset M\otimes_\DC L(\delta)$ is a $\hfb$-submodule. Hence $T$ also preserves the $\hfb$-local finiteness condition. So  $T$ preserves the category $\hCO$,  hence it makes $\hCO$ into a {\em graded category}. As $\langle \delta,K\rangle=0$, the functor $T$ in addition preserves the level, i.e.~it induces a grading on the subcategories $\hCO_k$ for all $k\in\DC$.

\subsection{The graded center}
With any graded   category $(\CC,T)$ we can associate the following:

\begin{definition} The graded center of $(\CC,T)$  is the graded vector space $\CA=\CA(\CC,T)=\bigoplus_{n\in\DZ} \CA_n$,
where $\CA_n$ is the space of natural transformations $\tau\colon \id_{\CC}\to T^n$ with the property that for all objects $M$ of $\CC$ and all $m\in\DZ$, the morphism
$T^m(\tau^M)\colon T^m M\to T^mT^n M=T^{n}T^m M$ equals the morphism $\tau^{T^m M}$. 
\end{definition}
Note that $\CA$ carries a natural multiplication that makes it into a commutative, associative, unital algebra (cf.~\cite{AF08}). 

In particular, we have the graded centers $\CA$ of $(\hCO,T)$ and  $\CA_k$ of $(\hCO_k,T)$ for all $k\in\DC$. Clearly, $\CA=\prod_{k\in\DC} \CA_k$. Now there is only one value $k=\crit:=\langle-\rho,K\rangle$ for which $(\CA_k)_n$ is non-zero for all $n\in\DZ$. In fact, if $k\ne\crit$, then $(\CA_k)_n$ is the trivial vector space for all $n\ne 0$ (for more information about $(\CA_k)_0$ for $k\ne\crit$, see \cite{FieMZ}). However,  the spaces $(\CA_{\crit})_n$ are huge for any $n\ne 0$ (cf.~\cite{AF08}).

\subsection{The restricted category $\rCO$} In the following we abbreviate $\CA=\CA(\hCO,T)$. We now define the restricted subcategory $\rCO$ of $\hCO$ as a special fiber for the action of $\CA$.
\begin{definition} An object $M$ of $\hCO$ is called {\em restricted} if for all $n\ne 0$ and $\tau\in \CA_n$ we have that $\tau^M\colon M\to T^n M$ is the zero homomorphism.
\end{definition}

We denote by $\rCO\subset\hCO$ the full subcategory that contains all restricted objects and by $\rCO_k\subset\rCO$ the full subcategory of restricted modules of level $k$.
Note that if $k\ne\crit$, then each $M\in\hCO_k$ is restricted, i.e.~$\rCO_k=\hCO_k$, as $(\CA_k)_n=0$ for $n\ne 0$. A simple highest weight module $L(\mu)$ is always restricted.

The inclusion functor $\rCO\to \hCO$ has adjoints on both sides. For $M\in\hCO$ we denote by $M^\prime$ the submodule of $M$ that is generated by the images of all homomorphisms $T^{-n}\tau^M\colon T^{-n}M\to M$ with $\tau\in\CA_n$ and $n\ne 0$. Set $\ol M:= M/M^\prime$. Dually, denote by $\ul M$ the submodule of $M$ that contains all elements $m$ with $\tau^M(m)=0$ for all $\tau\in\CA_n$, $n\ne 0$. Then these definitions extend to functors $\ol\cdot,\ul\cdot\colon\hCO\to\rCO$.
The next result follows  easily  from the definitions.

\begin{lemma}  The functor $M\mapsto \ol M$ is left adjoint to the inclusion functor $\rCO\subset\hCO$, and the functor $M\mapsto \ul M$ is right adjoint to the inclusion functor.
\end{lemma}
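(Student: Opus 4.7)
The plan is to deduce both adjunctions directly from the definitions, using only the naturality of elements of $\CA$. In both cases the unit, respectively counit, of the adjunction will be the obvious canonical map ($\pi_M\colon M\to\ol M$ or $\iota_M\colon\ul M\hookrightarrow M$), and the bijection of Hom-sets is obtained by composing with it.

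For the left adjunction, I would show that for $M\in\hCO$ and $R\in\rCO$ the canonical projection $\pi_M$ induces a bijection
$$
\Hom_{\rCO}(\ol M,R)\stackrel{\sim}{\longrightarrow}\Hom_{\hCO}(M,R),\qquad g\mapsto g\circ\pi_M.
$$
Injectivity is immediate since $\pi_M$ is an epimorphism. For surjectivity the key observation is that any $f\colon M\to R$ in $\hCO$ annihilates $M^\prime$. Indeed, for $\tau\in\CA_n$ with $n\ne 0$ the naturality square gives $\tau^R\circ f=T^n(f)\circ\tau^M$, and applying $T^{-n}$ yields
$$
f\circ T^{-n}\tau^M=T^{-n}\tau^R\circ T^{-n}f.
$$
Since $R$ is restricted we have $\tau^R=0$, so $f$ kills the image of $T^{-n}\tau^M$ for every such $\tau$ and $n$, and hence vanishes on the submodule $M^\prime$ generated by all these images. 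Therefore $f$ factors uniquely through $\ol M$.

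The right adjunction is handled by the dual argument. For $R\in\rCO$ and $M\in\hCO$ the inclusion $\iota_M$ induces
$$
\Hom_{\rCO}(R,\ul M)\stackrel{\sim}{\longrightarrow}\Hom_{\hCO}(R,M),\qquad g\mapsto\iota_M\circ g,
$$
and injectivity is clear from $\iota_M$ being a monomorphism. For surjectivity, given $f\colon R\to M$, naturality gives $\tau^M\circ f=T^n(f)\circ\tau^R=0$ for all $\tau\in\CA_n$ with $n\ne 0$, so $f(R)\subseteq\ul M$ and $f$ factors through the inclusion.

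The only piece of routine bookkeeping — and the main place where one has to be careful, though there is no real obstacle — is to verify that $\ol{\,\cdot\,}$ and $\ul{\,\cdot\,}$ are genuinely functors on $\hCO$, i.e.\ that any $f\colon M\to N$ satisfies $f(M^\prime)\subseteq N^\prime$ and $f(\ul M)\subseteq\ul N$. Both assertions follow from the very same naturality identity $\tau^N\circ f=T^n(f)\circ\tau^M$ used above, applied respectively to elements in the image of $T^{-n}\tau^M$ and to elements of $\ker\tau^M$. Once this is in hand, the naturality in both arguments of the two Hom-bijections constructed above is automatic, since $\pi_M$ and $\iota_M$ are themselves natural in $M$.
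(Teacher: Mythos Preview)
Your proof is correct and is exactly the routine verification the paper has in mind: the paper does not write out a proof at all, simply remarking that the result ``follows easily from the definitions.'' Your argument supplies precisely those details, using the naturality identity $\tau^N\circ f=T^n(f)\circ\tau^M$ in the expected way, so there is nothing to add or correct.
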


We define the {\em restricted Verma module} corresponding to $\lambda\in\hfhd$ as
$$
\rDelta(\lambda):=\ol{\Delta(\lambda)}
$$
 and the {\em restricted dual Verma module} as
 $$
 \rnabla(\lambda):=\ul{\nabla(\lambda)}.
 $$
 
\section{Deformed category $\CO$}

One of the main methods in our approach to the representation theory of Kac--Moody algebras is the following deformation idea. Let us denote by $S=S(\fh)$ and $\hS:=S(\hfh)$ the symmetric algebras associated with the vector spaces $\fh$ and $\hfh$. The projection $\hfh\to\fh$ along the decomposition $\hfh=\fh\oplus\DC K\oplus \DC D$ yields a homomorphism $\hS\to S$ of algebras. Let $A$ be a commutative, Noetherian, unital $S$-algebra. Then we can consider $A$ as an $\hS$-algebra via the above homomorphism. We call such an algebra in the following a  {\em deformation algebra}. As $A$ contains a unit, we have a canonical map $\tau\colon  \hfh\to A$, $f\mapsto f.1_A$. Note that, as we start with an $S$-algebra, we will always have $\tau(K)=\tau(D)=0$.

By a $\hfg_A$-module we mean in the following an $A$-module together with  an action of $\hfg$ that is $A$-linear, i.e.~a $\hfg$-$A$-bimodule. 
Let $M$ be a $\hfg_A$-module and $\lambda\in\hfhd$. We define the $\lambda$-weight space of $M$ as
$$
M_\lambda:=\{m\in M\mid H.m=(\lambda(H)+\tau(H))m\text{ for all $H\in\hfh$}\}.
$$
(Note that it would be more appropriate to  call this the $\lambda+\tau$-weight space.)

Let $M$ be a $\hfg_A$-module.
\begin{definition} 
\begin{enumerate}
\item
We say that $M$ is a {\em weight module} if $M=\bigoplus_{\lambda\in\hfhd} M_\lambda$.
\item We say that $M$ is {\em locally $\hfb_A$-finite}, if every element in $M$ is contained in a $\hfb_A$-submodule that is finitely generated as an $A$-module.
\end{enumerate}
\end{definition}
We denote by $\hCO_A$ the full subcategory of the category of $\hfg_A$-modules that contains all locally $\hfb_A$-finite weight modules.

Note that if $A=\DK$ is a field, then $\hCO_\DK$ is a direct summand of the usual category $\hCO$ defined for the Lie algebra $\hfg\otimes_\DC \DK$. It contains all modules $M$ whose weights have the special form $\lambda+\tau$ with $\lambda\in\hfhd$ (note that the later element can be considered as a $\DK$-linear form on the Cartan subalgebra $\hfh\otimes_\DC \DK$).

\subsection{Verma modules in $\hCO_A$}
Let $\lambda\in\hfhd$ and denote by $A_\lambda$ the $\hfb_A$-module that is free of rank one as an $A$-module and on which $\hfh$ acts by the character $\lambda+\tau$ and $[\hfb,\hfb]$ acts trivially. The {\em deformed Verma module}   with highest weight $\lambda$  is
$$
\Delta_A(\lambda):=U(\hfg)\otimes_{U(\hfb)} A_\lambda.
$$
Then $\Delta_A(\lambda)$ is a weight module  and each weight space $\Delta_A(\lambda)_\mu$ is a  free $A$-module of finite rank. Moreover, $\Delta_A(\lambda)$ is $\hfb_A$-locally finite, so it appears as an object in $\hCO_A$. If $A\to A^\prime$ is a homomorphism of deformation algebras, then $\Delta_A(\lambda)\otimes_A A^\prime\cong \Delta_{A^\prime}(\lambda)$. 

\begin{definition} We say that an object $M$ of $\hCO_A$ {\em admits a Verma flag} if there is a finite filtration
$$
0=M_0\subset M_1\subset \dots\subset M_n=M
$$
such that for each $i=1,\dots, n$ the quotient $M_i/M_{i-1}$ is isomorphic to a deformed Verma module $\Delta_A(\mu_i)$ for some $\mu_i\in\hfhd$. 
\end{definition}

It turns out that the multiset $\{\mu_1,\dots,\mu_n\}$ is independent of the chosen filtration and hence for each $M$ that admits a Verma flag the multiplicity
$$
(M:\Delta_A(\nu)):=\#\{i\mid \mu_i=\nu\}
$$
is well defined for all $\nu\in\hfhd$.
 
\subsection{Simple objects in $\hCO_A$}
Let us now  assume that $A$ is a {\em local} deformation algebra with maximal ideal $\fm$ and residue field $\DK=A/\fm$. Note that we can consider each $\hfg_\DK$-module as a $\hfg_A$-module on which $A$ acts via the quotient map $A\to\DK$. This even extends to a functor  $\Res\colon \hCO_\DK\to\hCO_A$. It is clearly isomorphic to the unique simple quotient of $\Delta_A(\lambda)$. 

For $\lambda\in\hfhd$ we have the simple quotient $L_\DK(\lambda)$ of the Verma module $\Delta_\DK(\lambda)$ in $\hCO_\DK$ and we define $L_A(\lambda):=\Res(L_\DK(\lambda))$. 
\begin{proposition} [\cite{FieMZ}] The set $\{L_A(\lambda)\}_{\lambda\in\hfhd}$ is a full set of representatives for the simple objects in $\hCO_A$.
\end{proposition}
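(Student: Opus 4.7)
My plan is to verify three things in turn: simplicity of each $L_A(\lambda)$, pairwise non-isomorphism, and exhaustiveness.

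First I would check simplicity. The key observation is that since $A$ acts on $L_A(\lambda)=\Res(L_\DK(\lambda))$ through $A\to\DK$, the maximal ideal $\fm$ annihilates $L_A(\lambda)$. Consequently every $\hfg_A$-submodule is in particular a $\hfg\otimes_\DC\DK$-submodule of $L_\DK(\lambda)$ in $\hCO_\DK$, so it is either zero or everything by simplicity downstairs. The non-isomorphism part is then immediate from the fact that $\lambda$ is recoverable from the module as the unique maximal weight occurring in $L_A(\lambda)$: indeed its $\lambda$-weight space is a free rank-one $\DK$-module and no weight strictly greater than $\lambda$ shows up.

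The main work is to show every simple $L\in\hCO_A$ arises this way. Using that $L$ is a weight module and $\hfb_A$-locally finite, I would first argue that $L$ possesses a highest weight: pick any nonzero $v\in L$ inside a finitely generated $\hfb_A$-submodule, iterate the action of positive root vectors (each such application strictly raises the weight, and the set of weights appearing in the finite $\hfb_A$-stable piece generated by $v$ is finite), and obtain a nonzero weight vector $v_0\in L_\lambda$ annihilated by $\hfn^+:=\bigoplus_{\alpha\in\hR^+}\hfg_\alpha$. Simplicity of $L$ forces $L=U(\hfg).v_0=U(\hfn^-).v_0$, and since $U(\hfn^-)$ strictly lowers weights away from $\lambda$, we conclude
\[
L_\lambda = A\cdot v_0,
\]
i.e.\ $L_\lambda$ is a cyclic $A$-module.

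Next I would perform the descent to $\hCO_\DK$. The submodule $\fm\cdot L\subset L$ is a $\hfg_A$-submodule, so by simplicity either $\fm\cdot L=0$ or $\fm\cdot L=L$. In the second case $\fm\cdot L_\lambda=L_\lambda$, and since $L_\lambda$ is finitely generated over the local ring $A$, Nakayama's lemma yields $L_\lambda=0$, contradicting $v_0\neq 0$. Hence $\fm\cdot L=0$, meaning $L$ is annihilated by $\fm$ and therefore comes from a $\hfg\otimes_\DC\DK$-module $L'\in\hCO_\DK$ via $\Res$. Any $\hfg_\DK$-submodule of $L'$ is a $\hfg_A$-submodule of $L$, so $L'$ is simple in $\hCO_\DK$, hence $L'\cong L_\DK(\lambda)$ by the classical theory and $L\cong L_A(\lambda)$.

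The only genuinely delicate point is verifying that $L_\lambda$ is finitely generated over $A$ so that Nakayama applies; everything else is formal manipulation of adjunctions and weight-space arguments. This is handled exactly by combining the triangular decomposition with the fact that $v_0$ generates $L$, which forces $L_\lambda$ to be cyclic rather than merely of bounded rank.
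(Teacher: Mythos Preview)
The paper does not prove this proposition in the text; it is simply attributed to \cite{FieMZ}. Your argument is correct and is the standard one: produce a highest weight vector via local $\hfb_A$-finiteness, use Nakayama on the (cyclic) top weight space to force $\fm L=0$, and then invoke the classification of simples in $\hCO_\DK$.

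One notational imprecision is worth flagging. Simplicity of $L$ as a $\hfg_A$-module yields $L=A\cdot U(\hfg)\cdot v_0$, not literally $L=U(\hfg)\cdot v_0$ as you write, since $U(\hfh)v_0$ is only the $\DC$-subalgebra of $A$ generated by the values $\lambda(H)+\tau(H)$ acting on $v_0$ and need not contain all of $Av_0$. This does not damage the argument: by $A$-linearity of the $\hfg$-action and PBW one still obtains $L=U(\hfn^-)\cdot(Av_0)$, whence $L_\lambda=Av_0$ is cyclic over $A$, and your Nakayama step goes through unchanged. Your final conclusion $L_\lambda=A\cdot v_0$ is the correct one, so you evidently had the right picture in mind.
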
 

\subsection{The restricted deformed category $\rCO_A$}

As in the non-deformed situation we can define the shift functor $T$ on $\hCO_A$ that maps an object $M$ to $M\otimes_\DC L(\delta)$  and has the obvious impact on morphisms. Then $T\colon\hCO_A\to\hCO_A$ is an equivalence and we obtain the graded $A$-algebra $\CA_A=\CA(\hCO_{A},T)$. 
An object $M$ of $\hCO_{A}$ is called {\em restricted} if $\tau^M\colon M\to T^{n}M$ is the zero morphism for each $\tau\in(\CA_A)_{n}$, $n\ne 0$. We define $\tCO_{A}$ as the full subcategory of $\hCO_{A}$ that contains all restricted objects. 

As before there is a functor $\ol\cdot\colon\hCO_A\to\rCO_A$ that is left adjoint to the inclusion functor (it is constructed as in the non-restricted case). The {\em deformed restricted Verma module} with highest weight is 
$$
\rDelta_A(\lambda):=\ol{\Delta_A(\lambda)}.
$$

\section{Projective objects in $\hCO_A$}
Let $A$ be a local deformation algebra. Now we want to study projective objects in the deformed category $\hCO_A$. In particular, we want to study projective covers of simple objects $L_A(\lambda)$, i.e.~projective objects $P$ together with a surjective map $c\colon P\to L_A(\lambda)$ with the following property: If $g\colon M\to P$ is a homomorphism such that $c\circ g\colon M\to L_A(\lambda)$ is surjective, then $g$ is surjective. Such projective covers do not always exists in $\hCO_A$. But when we restrict ourselves to {\em truncated categories}, then the situation improves. 

\subsection{Truncated subcategories}
The truncations that we are going to consider are  associated to open, locally bounded subsets of $\hfhd$. 

\begin{definition} A subset $\CJ$ of $\hfhd$ is called {\em open}, if for all $\lambda\in\CJ$ and all $\mu\in\hfhd$ with $\mu\le\lambda$ we have  $\mu\in\CJ$. A subset $\CJ$ is {\em locally bounded}, if for all $\mu\in\hfhd$ the set $\CJ_{\ge\mu}:=\{\gamma\in\CJ\mid \gamma\ge\mu\}$ is finite.
\end{definition}
 
Let us fix an open, locally bounded subset $\CJ$ of $\hfhd$. We define the full subcategory $\hCO_A^{\CJ}$ of $\hCO_A$ that contains all objects $M$ with the property that $M_\lambda\ne0$ implies $\lambda\in \CJ$. 

Let $M\in\hCO_A$ and let $M^\prime$ be the submodule of $M$ that is generated by the weight spaces $M_\nu$ with $\nu\not\in \CJ$ and set $M^\CJ:=M/M^\prime$. Then $M\mapsto M^\CJ$ is a functor from $\hCO_A$ to $\hCO_A^\CJ$ that is left adjoint to the inclusion functor $\hCO_A^\CJ\subset\hCO_A$.
 
\subsection{Existence of projective covers}
The main objective of this section is to give a new proof of the following result.  
\begin{theorem}\label{theorem-projcov} Suppose that $A$ is a local deformation algebra. Let $\mu\in\CJ$. Then there exists a projective cover $P_A^{\CJ}(\mu)\to L_A(\mu)$ in $\hCO_A^{\CJ}$. 
\end{theorem}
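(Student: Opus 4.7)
My plan is to mimic the classical construction of projective covers in truncated versions of category $\CO$, adapted to the deformation setting, and then to extract a cover by invoking locality of $A$.

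First, I would construct a projective object $Q \in \hCO_A^{\CJ}$ surjecting onto $L_A(\mu)$ and admitting a finite Verma flag with composition factors $\Delta_A(\nu)$ only for $\nu \in \CJ_{\geq \mu}$. The starting point is $Q_0 := \Delta_A(\mu)$, which belongs to $\hCO_A^{\CJ}$ because $\CJ$ is downward closed. If $Q_0$ is not yet projective in $\hCO_A^{\CJ}$, then there exists some $\nu \in \CJ$ with $\Ext^1_{\hCO_A^{\CJ}}(Q_0,\nabla_A(\nu))\ne 0$, and standard reciprocity for Verma against dual Verma forces $\nu > \mu$, hence $\nu \in \CJ_{\geq\mu}$. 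Extending $Q_0$ by $\Delta_A(\nu)$ at the top using a generator of the offending Ext group produces $Q_1$, which still admits a Verma flag, still surjects onto $L_A(\mu)$, and has strictly larger Verma multiplicity at $\nu$. Iterating gives a chain $Q_0 \subset Q_1 \subset \cdots$. Because $\CJ_{\geq\mu}$ is finite and each $(Q_i:\Delta_A(\nu))$ is controlled by $\Ext$-groups that are finitely generated over the Noetherian ring $A$, the process terminates, producing the desired $Q$.

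Second, I would argue that $Q$ is genuinely projective in $\hCO_A^{\CJ}$ and not merely that $\Ext^1(Q,\nabla_A(\nu))$ vanishes for $\nu\in\CJ_{\geq\mu}$. For this, it suffices to check vanishing of $\Ext^1(Q,\nabla_A(\nu))$ for \emph{all} $\nu\in\CJ$: weights $\nu\not\geq\mu$ give no contribution because $Q$ is generated by weights $\geq\mu$ and the relevant $\Hom$-spaces already vanish (so extensions split on weight-space grounds). One then upgrades vanishing against dual Vermas to vanishing against arbitrary objects $M\in\hCO_A^{\CJ}$ by filtering $M$ suitably; since $Q$ has only finitely many weights above a fixed weight and each weight space of $M$ is locally $\hfb_A$-finite, the usual d\'evissage against the socle filtration, whose layers embed into direct products of dual Vermas, is enough.

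Third, to extract the projective cover itself, I would use that $A$ is local. Because $Q$ has a finite Verma flag and each weight space is finitely generated over the Noetherian ring $A$, the endomorphism algebra $\End_{\hCO_A^{\CJ}}(Q)$ is a finitely generated $A$-algebra, hence semiperfect, so idempotents lift modulo the radical and $Q$ admits a Krull--Schmidt decomposition into indecomposable projectives with local endomorphism rings. Exactly one indecomposable summand maps surjectively onto $L_A(\mu)$, and this summand is the desired projective cover $P_A^{\CJ}(\mu)\to L_A(\mu)$.

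The main obstacle I anticipate is the first step: controlling the inductive construction and showing it terminates. In the deformed setting, one is working with $\Ext^1$-groups that are $A$-modules rather than vector spaces, and the iterative Verma extensions have to be chosen with care so that the Verma multiplicities really do stabilize after finitely many steps; this is where one needs the finiteness of $\CJ_{\geq\mu}$ together with Nakayama-type arguments relating $A$-level Ext to the residue-field picture. A secondary technical point is the passage from projectivity against all dual Vermas $\nabla_A(\nu)$, $\nu\in\CJ$, to projectivity against arbitrary objects of $\hCO_A^{\CJ}$, for which one must develop, or at least invoke, a suitable dual-Verma approximation of general objects in the truncated category.
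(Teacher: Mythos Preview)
Your overall strategy---build a Verma-filtered object by iterated extensions, then split off an indecomposable summand using locality of $A$---is in the spirit of the original BGG construction, and is genuinely different from what the paper does. The paper instead writes down in one stroke the induced module
\[
Q_A^{\CJ}(\mu)=U(\hfg)\otimes_{U(\hfb)}U(\hfb)^{\CJ'}\otimes_{\hS}A_\mu,
\]
observes that it represents the functor $M\mapsto M_\mu$ on $\hCO_A^{\CJ}$ (hence is projective outright), reads off its Verma flag from the PBW decomposition, and then extracts an indecomposable summand containing $\Delta_A(\mu)$ once. The cover property is then proved by a short induction on $|\CJ_{\ge\mu}|$, comparing with the truncation to $\CJ\setminus\{\nu\}$ for $\nu$ maximal. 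No Ext computations, no termination issues, no d\'evissage.

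However, your proposal as written has a genuine gap in steps~1 and~2. The criterion ``$\Ext^1_{\hCO_A^{\CJ}}(Q,\nabla_A(\nu))=0$ for all $\nu$'' does \emph{not} detect projectivity: since $\hCO_A^{\CJ}$ is a Serre subcategory of $\hCO_A$, Ext groups agree with those in $\hCO_A$, and there one has $\Ext^1(\Delta_A(\mu),\nabla_A(\nu))=0$ for \emph{all} $\mu,\nu$ (the standard Verma--dual-Verma orthogonality). So already $Q_0=\Delta_A(\mu)$ satisfies your stopping condition and the iteration never starts, yet $\Delta_A(\mu)$ is not projective in $\hCO_A^{\CJ}$ in general. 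The d\'evissage in step~2 does not repair this: from a short exact sequence $0\to L\to\nabla\to C\to 0$ one only gets that $\Ext^1(Q,L)$ is a quotient of $\Hom(Q,C)$, which need not vanish. The correct test is $\Ext^1(Q,L_A(\nu))=0$ for all $\nu\in\CJ$; if you switch to that, you must rework the inductive step (one typically treats the $\nu\in\CJ_{\ge\mu}$ in increasing order and extends by enough copies of $\Delta_A(\nu)$ at once, using that for $\nu$ maximal among the remaining weights $\Hom(\Delta_A(\nu),-)$ is just the $\nu$-weight space), and the termination and finiteness arguments over the local ring $A$ then need to be redone accordingly.
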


In order to prove the above theorem, we first consider the universal enveloping algebra $U(\hfb)$ under the adjoint action of $\hfh$, so we obtain a weight space decomposition $U(\hfb)=\bigoplus_{\gamma\in\DZ_{\ge 0}\hR^+}U(\hfb)_\gamma$. For $\mu\in\CJ$ we define $\CJ^\prime=\CJ-\mu=\{\nu\in\hfhd\mid \nu=\gamma-\mu\text{ for some $\gamma\in\CJ$}\}$ and $\CI^\prime=\hfhd\setminus\CJ^\prime$. Then the vector space $U(\hfb)_{\CI^\prime}=\bigoplus_{\gamma\in\CI^\prime}U(\hfb)_\gamma$ is a (two-sided) ideal in $U(\hfb)$, and hence $U(\hfb)^{\CJ^\prime}=U(\hfb)/U(\hfb)_{\CI^\prime}$ is a $U(\hfb)$-module. As $\hS=U(\hfh)$, we get a (right) action of $\hS$ on $U(\hfb)^{\CJ^\prime}$, and hence we can form the tensor product 
$$
Q_A^{\CJ}(\mu):=U(\hfg)\otimes_{U(\hfb)} U(\hfb)^{\CJ^\prime}\otimes_{\hS} A_\mu.
$$
This is a $\hfg_A$-module. As in \cite{RCW} (see also \cite{FieMZ}) one shows that this object represents the functor $\hCO_A^\CJ\to A\catmod$, 
 $M\mapsto M_\mu$, so by the definition of $\hCO_A$ it is projective in $\hCO_A$. Moreover, it admits a Verma flag with multiplicities
$$
(Q_A^{\CJ}(\mu):\Delta_A(\nu))=\begin{cases}
\dim_\DC U(\hfn)_{\nu-\mu},&\text{ if $\nu\in\CJ$},\\
0,&\text{ if $\nu\not\in\CJ$},
\end{cases}
$$
where $\hfn=\bigoplus_{\alpha\in\hR^+}\hfg_\alpha$. In particular, we have $(Q_A^{\CJ}(\mu):\Delta_A(\nu))\ne0$ only if $\nu\ge\mu$ and $(Q_A^{\CJ}(\mu):\Delta_A(\mu))=1$.

Every direct summand of a module with a Verma flag admits a Verma flag as well. As $\Delta_A(\mu)$ occurs with multiplicity one, there is a direct summand  $P^{\CJ}_A(\mu)$ of $Q_A^{\CJ}(\mu)$ with 
$$
(P^{\CJ}_A(\mu):\Delta_A(\mu))=1.
$$
Note that we do not yet claim that $P^{\CJ}_A(\mu)$ is unique up to isomorphism, yet this will be a consequence once we proved Theorem \ref{theorem-projcov}. For now, it suffices to choose a direct summand with the above properties.

As all other Verma subquotients of $P^{\CJ}_A(\mu)$ have highest weights $\mu^\prime$ with $\mu^\prime>\mu$, there is a surjection $P^{\CJ}_A(\mu)\to\Delta_A(\mu)$, hence a surjection $P^{\CJ}_A(\mu)\to L_A(\mu)$ and this surjection is unique up to non-zero scalars in $\DC$. We can now prove the above  theorem.

\begin{proof}[Proof of  Theorem \ref{theorem-projcov}] We prove the statement by induction on the number of elements in the set $\CJ_{\ge \mu}$. If it contains only the element $\mu$, then $P_A^{\CJ}(\mu)=Q_A^{\CJ}(\mu)\cong \Delta_A(\mu)$ and the locality of $A$ implies that $\Delta_A(\mu)\to L_A(\mu)$ is a projective cover. 

So let us fix $\mu\in\CJ$ and let us assume that the statement is proven for all pairs $\mu^\prime\in\CJ^\prime$ such that $\CJ^\prime_{\ge \mu^\prime}$ contains strictly less elements then $\CJ_{\ge\mu}$. As a next step we prove that $L_A(\mu)$ is then the only simple quotient of $P_A^\CJ(\mu)$. Suppose that this is not the case, hence that there exists a surjection $P_A^\CJ(\mu)\to L_A(\nu)$ for some $\nu\ne \mu$. As $P_A^{\CJ}(\mu)$ is generated by its weight spaces corresponding to weights in $\CJ_{\ge\mu}$, this implies $\nu\in\CJ_{\ge \mu}$. By induction assumption, $P_A^\CJ(\nu)\to L_A(\nu)$ is a projective cover. Now by the projectivity of $P_A^\CJ(\mu)$ there is a homomorphism $P_A^{\CJ}(\mu)\to P_A^{\CJ}(\nu)$ such that the diagram

\centerline{
\xymatrix{
P_A^{\CJ}(\mu)\ar[rr]\ar[dr]&& P_A^{\CJ}(\nu)\ar[dl]\\
&L_A(\nu)
}
}
\noindent
commutes. As $P_A^{\CJ}(\nu)\to L_A(\nu)$ is a projective cover, the homomorphism $P_A^\CJ(\mu)\to P_A^\CJ(\nu)$ is surjective, and from the projectivity of $P_A^{\CJ}(\nu)$ and the indecomposability of $P_A^\CJ(\mu)$ we deduce $P_A^\CJ(\mu)\cong P_A^\CJ(\nu)$, which contradicts what we already know about the Verma subquotients of both objects. 
Hence we have proven that $L_A(\mu)$ is the only simple quotient of $P_A^\CJ(\mu)$. 

Now let $\CJ^\prime\subset\CJ$ be another open subset of $\hfhd$. Then $P_A^\CJ(\mu)^{\CJ^\prime}$ is a quotient of $P_A^\CJ(\mu)$ and it is projective in $\hCO_A^{\CJ^\prime}$, as the functor $M\mapsto M^{\CJ^\prime}$, $\hCO_A^\CJ\to\hCO_A^{\CJ^\prime}$, is left adjoint to the inclusion functor. As $P_A^\CJ(\mu)$ has a unique simple quotient, we have $P_A^\CJ(\mu)^{\CJ^\prime}\cong P_A^{\CJ^\prime}(\mu)$.

Now we can prove that $c\colon P^\CJ_A(\mu)\to L_A(\mu)$ is a projective cover. So let $g\colon M\to P_A^\CJ(\mu)$ be a homomorphism such that $c\circ g\colon M\to L_A(\mu)$ is surjective. Then the projectivity implies that there is a homomorphism $h\colon P_A^\CJ(\mu)\to M$ such that the diagram

\centerline{
\xymatrix{
P_A^{\CJ}(\mu)\ar[r]^{h}\ar[dr]& M\ar[r]^g&P_A^{\CJ}(\mu)\ar[dl]\\
&L_A(\mu)
}
}
\noindent
is commutative. We will now prove that the composition $f=g\circ h$ is surjective, from which the surjectivity of $g$ readily follows. Let $\nu\in\CJ$ be a maximal element and consider $\CJ^\prime=\CJ\setminus\{\nu\}$. Then $f^{\CJ^\prime}$ is an endomorphism of $P_A^\CJ(\mu)^{\CJ^\prime}\cong P_A^{\CJ^\prime}(\mu)$. By induction we know that $P_A^{\CJ^\prime}(\mu)\to L_A(\mu)$ is  a projective cover (in $\hCO_A^{\CJ^\prime}$), hence $f^{\CJ^\prime}$ is an automorphism. Hence the quotient $P_A^{\CJ}(\mu)/\im f$ is generated by its $\nu$-weight space. But this quotient then has to be trivial, as $P_A^\CJ(\mu)$ has no simple quotient of highest weight $\nu$. Hence $f$ is surjective, which is what we wanted to show. 
\end{proof}

\subsection{Restricted projective covers} Again we suppose that $A$ is a local deformation algebra. 
Each simple object $L_A(\lambda)$ is restricted, and $\{L_A(\lambda)\}_{\lambda\in\hfhd}$ is  a full set of representatives of the simple objects in $\rCO_A$ as well.  We will now show that projective covers also exist in the truncated restricted categories $\rCO_A^\CJ=\rCO_A\cap\hCO_A^\CJ$.

\begin{theorem} Suppose that $A$ is a local deformation algebra and let $\CJ\subset\hfhd$ be an open, locally bounded subset. Then there exists for each $\lambda\in\CJ$ a projective cover $\rP_A^\CJ(\lambda)\to L_A(\lambda)$ in $\rCO_A^\CJ$.
\end{theorem}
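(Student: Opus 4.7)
My plan is to obtain the restricted projective cover by applying the left adjoint $\ol\cdot\colon \hCO_A^\CJ\to\rCO_A^\CJ$ of the inclusion to the unrestricted projective cover constructed in Theorem~\ref{theorem-projcov}. Concretely, set $\rP_A^\CJ(\mu):=\ol{P_A^\CJ(\mu)}$ and let $\pi\colon P_A^\CJ(\mu)\to \rP_A^\CJ(\mu)$ be the canonical quotient. Since $L_A(\mu)$ is restricted, the surjection $c\colon P_A^\CJ(\mu)\to L_A(\mu)$ factors uniquely through $\pi$ to give a surjection $\bar c\colon \rP_A^\CJ(\mu)\to L_A(\mu)$; moreover $\rP_A^\CJ(\mu)\in\hCO_A^\CJ$ because $\ol\cdot$ is a quotient construction and hence only decreases the set of occurring weights.

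For projectivity of $\rP_A^\CJ(\mu)$ in $\rCO_A^\CJ$, I would first observe that restricted objects form a Serre subcategory of $\hCO_A^\CJ$: naturality of any $\tau\in(\CA_A)_n$ implies that the vanishing of $\tau^M$ is inherited by subobjects and quotients of $M$, so in particular the inclusion $\rCO_A^\CJ\hookrightarrow\hCO_A^\CJ$ is exact. Combined with the adjunction identity
\[
\Hom_{\rCO_A^\CJ}(\rP_A^\CJ(\mu), N)\cong \Hom_{\hCO_A^\CJ}(P_A^\CJ(\mu), N) \qquad (N\in\rCO_A^\CJ)
\]
and the projectivity of $P_A^\CJ(\mu)$ in $\hCO_A^\CJ$ established by Theorem~\ref{theorem-projcov}, this immediately yields exactness of $\Hom_{\rCO_A^\CJ}(\rP_A^\CJ(\mu),-)$.

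The main obstacle is verifying the cover property: given $g\colon M\to \rP_A^\CJ(\mu)$ in $\rCO_A^\CJ$ with $\bar c\circ g$ surjective, I want to conclude that $g$ itself is surjective. My plan is a pullback argument. Form $N:=P_A^\CJ(\mu)\times_{\rP_A^\CJ(\mu)} M$ in $\hCO_A^\CJ$; it really lies in $\hCO_A^\CJ$ because weight spaces commute with limits and $N$ embeds in $P_A^\CJ(\mu)\oplus M$. Since $\pi$ is surjective, so is its base change $N\to M$, and therefore the composite $N\to M\to L_A(\mu)$ is surjective; by the cartesian property this equals the composite $N\to P_A^\CJ(\mu)\to L_A(\mu)$, so the cover property of $c$ from Theorem~\ref{theorem-projcov} forces $N\to P_A^\CJ(\mu)$ to be surjective. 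Composing with $\pi$ then shows that $N\to\rP_A^\CJ(\mu)$ is surjective; but this map also factors as $N\to M\stackrel{g}{\to}\rP_A^\CJ(\mu)$, so $g$ is surjective, as required. Thus the conceptual heart of the argument is the pullback trick transporting the cover property across the adjunction, with the auxiliary Serre check being routine but indispensable.
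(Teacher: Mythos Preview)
Your proof is correct and essentially matches the paper's: both set $\rP_A^\CJ(\mu)=\ol{P_A^\CJ(\mu)}$, derive projectivity from the adjunction with the exact inclusion $\rCO_A^\CJ\hookrightarrow\hCO_A^\CJ$, and deduce the cover property from that of $P_A^\CJ(\mu)\to L_A(\mu)$ in Theorem~\ref{theorem-projcov}. The only difference is in the last step: the paper first reduces to endomorphisms of $\rP_A^\CJ(\mu)$ and lifts such an endomorphism along $\pi$ using projectivity of $P_A^\CJ(\mu)$, whereas you pull an arbitrary test map $g$ back along $\pi$ via a fibre product---these are interchangeable standard manoeuvres (and note that you only use closure of $\rCO_A^\CJ$ under subobjects and quotients, so the word ``Serre'' is a slight overstatement but harmless).
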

\begin{proof} Consider the projective cover $P_A^\CJ(\lambda)\to L_A(\lambda)$ in $\CO_A^\CJ$ and consider its restriction $\ol{P_A^\CJ(\lambda)}\to \ol{L_A(\lambda)}=L_A(\lambda)$. As the functor $M\mapsto \ol M$ is left adjoint to the (exact) inclusion functor $\rCO_A\subset \CO_A$, $\ol{P_A^\CJ(\lambda)}$ is projective in $\rCO_A$. 

We have seen in the proof of Theorem \ref{theorem-projcov}  that $L_A(\lambda)$ is the only simple subquotient of $P_A^{\CJ}(\lambda)$. Hence $\ol{P_A^\CJ(\lambda)}$ is indecomposable. Now we show that $\rP_A^\CJ(\lambda):=\ol{P_A^\CJ(\lambda)}\to L_A(\lambda)$ is a projective cover. As we have seen in the proof of  Theorem \ref{theorem-projcov},  for this it is enough to show that if $f$ is an endomorphism of $\rP_A^\CJ(\lambda)$ that has the property that the composition $\rP_A^\CJ(\lambda)\stackrel{f}\to\rP_A^\CJ(\lambda)\to L_A(\lambda)$ is surjective, then $f$ is surjective. By projectivity of $P_A^\CJ(\lambda)$ we can find an endomorphism $f^\prime$ of $P_A^\CJ(\lambda)$ such that the diagram

\centerline{
\xymatrix{
P_A^{\CJ}(\lambda)\ar[r]^{f^\prime}\ar[d]& P_A^{\CJ}(\lambda)\ar[d]\\
\rP_A^\CJ(\lambda)\ar[r]^f&\rP_A^\CJ(\lambda)
}
}
\noindent
is commutative (here the vertical maps are the quotient maps). But now $f^\prime$ is surjective, as we have seen in the proof of Theorem \ref{theorem-projcov}, so $f$ has to be surjective as well.
\end{proof}

\section{Block decomposition of $\hCO_A$}

We denote by $\sim_A$ the equivalence relation on $\hfhd$ that is generated by  $\lambda\sim_A\mu$ if there exists some open, locally bounded subset $\CJ$ of $\hfhd$ and a non-zero homomorphism $P_A^{\CJ}(\lambda)\to P_A^{\CJ}(\mu)$. As $P_A^{\CJ}(\lambda)\to L_A(\lambda)$ is a projective cover in $\hCO_A^{\CJ}$, this condition is equivalent to the fact that $L_A(\lambda)$ occurs as a subquotient of $P_A^{\CJ}(\mu)$ (in the following we write $[P_A^{\CJ}(\mu):L_A(\lambda)]\ne 0$ if this is the case).  

For an equivalence class $\Lambda\subset\hfhd$ with respect to $\sim_A$ we define the full subcategory $\hCO_{A,\Lambda}$ of $\hCO_A$ that contains all objects $M$ with the property that if $L_A(\lambda)$ is a subquotient of $M$, then  $\lambda\in \Lambda$.

\begin{theorem}[Block decomposition] The functor
\begin{align*}
\prod_{\Lambda\in\hfhd/_{{\sim_A}}} \hCO_{A,\Lambda}&\to \hCO_A,\\
\{M_\Lambda\}&\mapsto\bigoplus M_\Lambda
\end{align*}
is an equivalence of categories. 
\end{theorem}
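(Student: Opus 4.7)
The plan is to construct, for each $M \in \hCO_A$ and each $\sim_A$-equivalence class $\Lambda \subseteq \hfhd$, a canonical subobject $M_\Lambda \subseteq M$ lying in $\hCO_{A,\Lambda}$, and to prove that $M = \bigoplus_\Lambda M_\Lambda$. The equivalence of categories then follows: any morphism $f\colon M \to N$ sends $M_\Lambda$ into $N_\Lambda$, because $f(M_\Lambda)$ is a quotient of $M_\Lambda$ and hence has all simple subquotients in $\Lambda$.

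I would define $M_\Lambda$ as the sum of all subobjects $N \subseteq M$ whose simple subquotients $L_A(\nu)$ all satisfy $\nu \in \Lambda$. A simple subquotient of $M_\Lambda$ always appears as a simple quotient of a cyclic subobject, which lies in some finite partial sum $N_1 + \cdots + N_k$; since the latter is a quotient of $N_1 \oplus \cdots \oplus N_k$, its composition factors lie in $\Lambda$. Thus $M_\Lambda$ is the maximal subobject of $M$ in $\hCO_{A,\Lambda}$, and for $\Lambda \ne \Lambda'$ the intersection $M_\Lambda \cap M_{\Lambda'}$ has composition factors in $\Lambda \cap \Lambda' = \emptyset$, so it vanishes.

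The central step is to show $\sum_\Lambda M_\Lambda = M$. Given $m \in M$, let $N$ be the $\hfg_A$-submodule of $M$ generated by $m$. A Vandermonde argument (applied to some $H \in \fh$ for which the values $\mu_i(H)$ on the weights of $m$ are pairwise distinct) shows that $N$ inherits the weight space decomposition, so the weight components $m_{\mu_1}, \ldots, m_{\mu_k}$ of $m$ lie in $N$; moreover $\CJ := \{\nu \mid \nu \leq \mu_i \text{ for some $i$}\}$ is open and locally bounded, and $N \in \hCO_A^\CJ$. By the representing property of $Q_A^\CJ(\mu_i)$, each $m_{\mu_i}$ gives a map $\varphi_i\colon Q_A^\CJ(\mu_i) \to N$, and the $\varphi_i$ jointly generate $N$. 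I would decompose each $Q_A^\CJ(\mu_i)$ inductively on the length of its Verma flag as a finite direct sum of covers $P_A^\CJ(\nu)$: pick a Verma quotient $\Delta_A(\nu)$ of maximal weight, lift the composition $Q_A^\CJ(\mu_i) \twoheadrightarrow \Delta_A(\nu) \twoheadrightarrow L_A(\nu)$ through $P_A^\CJ(\nu) \twoheadrightarrow L_A(\nu)$ to obtain a surjection $Q_A^\CJ(\mu_i) \twoheadrightarrow P_A^\CJ(\nu)$, split it by projectivity of $P_A^\CJ(\nu)$, and iterate on the kernel. For each resulting summand $P_A^\CJ(\nu)$, every composition factor $L_A(\lambda)$ of it yields a non-zero morphism $P_A^\CJ(\lambda) \to P_A^\CJ(\nu)$ (by lifting the cover $P_A^\CJ(\lambda) \twoheadrightarrow L_A(\lambda)$ through the realising subquotient, using projectivity of $P_A^\CJ(\lambda)$), so $\lambda \sim_A \nu$ by definition. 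Hence each summand lies in the block component $\hCO_{A,\Lambda}$ for the class $\Lambda$ of $\nu$, and $m$ therefore lies in a finite sum of the $M_\Lambda$.

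The main obstacle is the inductive decomposition of $Q_A^\CJ(\mu_i)$ into projective covers. It rests on Theorem \ref{theorem-projcov} and on the standard fact that a maximal-weight Verma quotient in a Verma flag appears as a quotient of the ambient module; once this is available, the passage from projectivity to the block relation $\sim_A$ is tautological.
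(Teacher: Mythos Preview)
Your argument is correct in substance and follows the same route as the paper's (very terse) proof: there $M_\Lambda$ is defined as the submodule generated by the images of all maps $P_A^{\CJ}(\lambda)\to M$ with $\lambda\in\Lambda$, the sum is observed to be direct by the definition of $\sim_A$, and it is simply asserted that $M$ is a quotient of a direct sum of various $P_A^{\CJ}(\lambda)$. Your proof supplies precisely the details behind this last assertion, via the representing objects $Q_A^{\CJ}(\mu_i)$ and their decomposition into indecomposable projective covers; your alternative definition of $M_\Lambda$ as the maximal subobject with composition factors in $\Lambda$ coincides with the paper's.

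One slip to fix: in the inductive decomposition of $Q_A^{\CJ}(\mu_i)$ you write ``pick a Verma quotient $\Delta_A(\nu)$ of maximal weight''. In a Verma flag the subquotient of \emph{maximal} highest weight sits at the bottom as a \emph{subobject}; what can always be realised as a quotient is a Verma of \emph{minimal} highest weight. Indeed, if $0\to\Delta_A(\alpha)\to E\to\Delta_A(\beta)\to 0$ with $\alpha\not>\beta$, then no weight of $E$ lies strictly above $\beta$ and below or equal to $\alpha$, so $E_\beta\cong A$ consists of highest weight vectors, the sequence splits, and the two layers may be swapped; iterating brings a minimal $\Delta_A(\nu)$ to the top. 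With ``maximal'' replaced by ``minimal'' (or with the simpler observation that any nonzero finitely generated object of $\hCO_A^{\CJ}$ has a simple quotient), the rest of your argument goes through unchanged.
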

\begin{proof} For an equivalence class $\Lambda$ and an object $M$ of $\hCO_A$ let $M_\Lambda$ be the submodule of $M$ that is generated by the images of all homomorphisms $P_A^{\CJ}(\lambda)\to M$ with $\lambda\in\Lambda$ and arbitrary open, locally bounded $\CJ$. By definition of $\sim_A$ the sum of all  $M_\Lambda$ is direct. Moreover, we have  $M=\bigoplus_\Lambda M_\Lambda$, as $M$  is isomorphic to a quotient of a direct sum of various  $P_A^{\CJ}(\lambda)$'s.
\end{proof}

\subsection{Restricted block decomposition}

The block decomposition above  has an immediate analogue in the restricted case: We define the equivalence relation $\ol\sim_A$ on $\hfhd$ as generated by $\lambda\ol\sim_A \mu$ if $[\rP_A^\CJ(\lambda):L_A(\mu)]\ne 0$ for some open, locally bounded subset $\CJ$. As before one proves the following result.
\begin{theorem}[Restricted block decomposition] The functor
\begin{align*}
\prod_{\Lambda\in\hfhd/_{{\ol\sim_A}}} \rCO_{A,\Lambda}&\to \rCO_A,\\
\{M_\Lambda\}&\mapsto\bigoplus M_\Lambda
\end{align*}
is an equivalence of categories. 
\end{theorem}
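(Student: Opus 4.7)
The plan is to imitate the proof of the (unrestricted) block decomposition above, replacing $P_A^\CJ(\lambda)$ by $\rP_A^\CJ(\lambda)$ throughout and transporting the key surjectivity statement from $\hCO_A$ into $\rCO_A$ by means of the left adjoint $\ol{\cdot}\colon\hCO_A\to\rCO_A$.

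For each $M\in\rCO_A$ and each $\ol\sim_A$-equivalence class $\Lambda\subset\hfhd$, I would let $M_\Lambda\subset M$ be the submodule generated by the images of all homomorphisms $\rP_A^\CJ(\lambda)\to M$ as $\lambda$ runs through $\Lambda$ and $\CJ$ runs through all open, locally bounded subsets of $\hfhd$. By the very definition of $\ol\sim_A$, every simple subquotient of $\rP_A^\CJ(\lambda)$ is an $L_A(\mu)$ with $\mu\ol\sim_A\lambda$, so $M_\Lambda$ lies in $\rCO_{A,\Lambda}$. Consequently $\Hom(M_\Lambda,M_{\Lambda'})=0$ whenever $\Lambda\ne\Lambda'$ (the image would have simple subquotients in two disjoint equivalence classes), and hence $\sum_\Lambda M_\Lambda$ is an internal direct sum.

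The decisive remaining point is that $M=\sum_\Lambda M_\Lambda$, equivalently that every $M\in\rCO_A$ admits a surjection from a direct sum of various $\rP_A^\CJ(\lambda)$'s. I would deduce this from the analogous statement in $\hCO_A$ used in the preceding unrestricted block decomposition: pick a surjection $\bigoplus_i P_A^{\CJ_i}(\lambda_i)\sur M$ in $\hCO_A$ and apply $\ol{\cdot}$. As a left adjoint this functor preserves direct sums and epimorphisms; it satisfies $\ol M=M$ because $M$ is already restricted; and by the construction carried out in the previous theorem it sends each $P_A^\CJ(\lambda)$ to the restricted projective cover $\rP_A^\CJ(\lambda)$. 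The resulting map $\bigoplus_i\rP_A^{\CJ_i}(\lambda_i)\sur M$ then establishes $M=\sum_\Lambda M_\Lambda=\bigoplus_\Lambda M_\Lambda$. Functoriality of the assignment $M\mapsto (M_\Lambda)_\Lambda$ is forced by the Hom-vanishing of the previous paragraph, and the inverse to the displayed direct-sum functor is obtained; this yields the claimed equivalence of categories.

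The step I expect to require the most care is the transfer of the covering property by a left adjoint: one must be confident that the images of the summands $\rP_A^{\CJ_i}(\lambda_i)$ obtained after applying $\ol{\cdot}$ actually generate $M$ and fall into the appropriate $M_\Lambda$. This is automatic from right-exactness and coproduct-preservation of $\ol{\cdot}$ together with the identification $\rP_A^\CJ(\lambda)=\ol{P_A^\CJ(\lambda)}$, so no genuinely new difficulty arises beyond what was needed in the non-restricted case; all the heavy lifting has already been done in the preceding theorems on restricted projective covers.
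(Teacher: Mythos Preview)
Your proposal is correct and follows exactly the approach the paper intends: the paper simply writes ``As before one proves the following result,'' meaning one repeats the argument of the non-restricted block decomposition with $\rP_A^\CJ(\lambda)$ in place of $P_A^\CJ(\lambda)$, which is precisely what you do. Your extra justification of the covering property via the left adjoint $\ol{\cdot}$ is a clean way to make the ``as before'' precise and introduces no new difficulty.
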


\subsection{BGGH-reciprocity}
The {\em linkage principle} and the {\em restricted linkage principle} now describe the equivalence classes under $\sim_A$ and $\ol\sim_A$ explicitely. The first  step towards these results are the respective BGGH-reciprocity statements.

\begin{theorem}[Deformed BGGH-reciprocity] 
Let $A$ be a local deformation algebra with residue field $\DK$ and let $\CJ$ be an open locally bounded subset of $\hfhd$ and $\mu\in\CJ$. Then  $P_A^{\CJ}(\mu)$ admits a Verma flag and we have 
$$
(P_A^{\CJ}(\mu):\Delta_A(\lambda))=
\begin{cases}
[\nabla_\DK(\lambda):L_\DK(\mu)],&\text{ if $\lambda\in\CJ$},\\
0,&\text{ if $\lambda\not\in\CJ$}
\end{cases}
$$
for all $\lambda\in\hfhd$. 
\end{theorem}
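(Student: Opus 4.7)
The plan is the standard BGG-reciprocity dévissage, carried out first over the deformation algebra $A$ and then specialised to the residue field $\DK=A/\fm$. The very first observation is that $P_A^{\CJ}(\mu)$ admits a Verma flag, because it was constructed as a direct summand of $Q_A^{\CJ}(\mu)$, and the multiplicities of the latter recorded in the preamble to Theorem \ref{theorem-projcov} involve only weights in $\CJ$; this settles the second case of the formula and reduces the task to computing $(P_A^{\CJ}(\mu):\Delta_A(\lambda))$ for $\lambda\in\CJ$.

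The key technical input is the standard Hom-Ext computation
\[
\Hom_{\hCO_A}(\Delta_A(\nu),\nabla_A(\lambda))\cong\begin{cases}A,& \nu=\lambda,\\ 0,&\text{otherwise,}\end{cases}\qquad \Ext^1_{\hCO_A}(\Delta_A(\nu),\nabla_A(\lambda))=0,
\]
which I would derive by identifying the $\hfn$-invariants in the weight spaces of $\nabla_A(\lambda)$ (using that $\nabla_A(\lambda)$ is the restricted $A$-dual of $\Delta_A(\lambda)$) and the familiar resolution-type vanishing argument for the first Ext group. Induction over the length of a Verma flag of $P_A^{\CJ}(\mu)$ then shows that $\Hom_{\hCO_A}(P_A^{\CJ}(\mu),\nabla_A(\lambda))$ is a free $A$-module of rank $(P_A^{\CJ}(\mu):\Delta_A(\lambda))$.

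Next I would pass to the residue field. The proof of Theorem \ref{theorem-projcov} shows that $L_A(\mu)$ is the only simple quotient of $P_A^{\CJ}(\mu)$; by the adjunction between $-\otimes_A\DK$ and restriction, any simple quotient of $P_A^{\CJ}(\mu)\otimes_A\DK$ in $\hCO_\DK^{\CJ}$ pulls back to a simple quotient of $P_A^{\CJ}(\mu)$ in $\hCO_A^{\CJ}$, so $L_\DK(\mu)$ is the only simple quotient of $P_A^{\CJ}(\mu)\otimes_A\DK$. Since this object is projective in $\hCO_\DK^{\CJ}$ (again by adjunction), it must be a direct sum of copies of $P_\DK^{\CJ}(\mu)$, and matching of $(\,\cdot\,:\Delta_\DK(\mu))$-multiplicities forces $P_A^{\CJ}(\mu)\otimes_A\DK\cong P_\DK^{\CJ}(\mu)$. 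As Verma multiplicities are preserved under the base change (each Verma weight space is $A$-free), this gives $(P_A^{\CJ}(\mu):\Delta_A(\lambda))=(P_\DK^{\CJ}(\mu):\Delta_\DK(\lambda))$, and the freeness of the Hom-module above specialises to $\dim_\DK\Hom_{\hCO_\DK}(P_\DK^{\CJ}(\mu),\nabla_\DK(\lambda))$.

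Finally, exactness of $\Hom_{\hCO_\DK}(P_\DK^{\CJ}(\mu),-)$ together with the equality $\Hom_{\hCO_\DK}(P_\DK^{\CJ}(\mu),L_\DK(\nu))=\delta_{\mu,\nu}\DK$ (which holds because $P_\DK^{\CJ}(\mu)$ is a projective cover of $L_\DK(\mu)$) identifies this dimension with $[\nabla_\DK(\lambda):L_\DK(\mu)]$; the local boundedness of $\CJ$, together with the fact that $P_\DK^{\CJ}(\mu)$ has only finitely many weights in $\CJ_{\ge\mu}$, makes the dévissage finite despite the potentially infinite global length of $\nabla_\DK(\lambda)$. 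The step I expect to be most delicate is the vanishing $\Ext^1_{\hCO_A}(\Delta_A(\nu),\nabla_A(\lambda))=0$ in the deformed setting and the concomitant justification that the Hom-dévissage against the possibly infinite-length module $\nabla_\DK(\lambda)$ really produces the correct composition multiplicity; once these are established the rest of the argument is formal.
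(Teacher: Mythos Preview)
Your argument is correct and follows the same overall strategy as the paper: both establish that $P_A^{\CJ}(\mu)$ has a Verma flag as a direct summand of $Q_A^{\CJ}(\mu)$, then use the unique-simple-quotient property from the proof of Theorem~\ref{theorem-projcov} to conclude $P_A^{\CJ}(\mu)\otimes_A\DK\cong P_\DK^{\CJ}(\mu)$, thereby reducing to the residue field. The only substantive difference is one of economy. The paper, having obtained $(P_A^{\CJ}(\mu):\Delta_A(\lambda))=(P_\DK^{\CJ}(\mu):\Delta_\DK(\lambda))$, simply invokes the classical (non-deformed) BGGH-reciprocity over the field $\DK$ and stops; you instead spell that step out via the $\Hom(-,\nabla_\DK(\lambda))$ d\'evissage. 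Your additional computation of $\Hom_{\hCO_A}(P_A^{\CJ}(\mu),\nabla_A(\lambda))$ as a free $A$-module over the deformation ring is correct but redundant: once you know the Verma multiplicities are preserved under $-\otimes_A\DK$, the deformed Hom--Ext statement plays no further role, and the delicate $\Ext^1$-vanishing over $A$ that you flag as a worry can be bypassed entirely. What your more explicit route buys is a self-contained argument independent of the literature reference; what the paper's route buys is brevity and the avoidance of the infinite-length issues for $\nabla_\DK(\lambda)$ that you correctly identify as needing care.
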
 
Note that the right hand side refers to the $\DK$-linear versions of the objects. 

\begin{proof} By construction, $P_A^\CJ(\mu)$ is a direct summand of an object that admits a Verma flag, hence also admits a Verma flag. In the proof of Theorem \ref{theorem-projcov}  we have shown that $P_A^{\CJ}(\mu)$ has a unique simple quotient, hence $P_A^{\CJ}(\mu)\otimes_A \DK$ must be indecomposable. As it is a direct summand of the projective object $Q_\DK^\CJ(\mu)$ in $\hCO_{\DK}^\CJ$, it  is projective. Hence it has to be isomorphic to $P_\DK^{\CJ}(\mu)$. Hence the Verma multiplicities of $P_A^\CJ(\mu)$ and $P_\DK^\CJ(\mu)$ coincide. So it suffices to prove the above statement in the case that $A=\DK$ is a field, in which case it reduces to the well-known non-deformed BGGH-reciprocity.
\end{proof}

The following is the restricted analogue of the above theorem. 
\begin{theorem}[Restricted BGGH-reciprocity] Let $A$ be a local deformation algebra and $\CJ$  an open locally bounded subset of $\hfhd$ and $\mu\in\CJ$. Then $\rP_A^{\CJ}(\mu)$ admits a restricted Verma flag and for the multiplicities we have
$$
(\rP_A^{\CJ}(\mu):\rDelta_A(\lambda))=
\begin{cases}
[\rnabla_\DK(\lambda):L_\DK(\mu)],&\text{ if $\lambda\in\CJ$},\\
0,&\text{ otherwise}
\end{cases}
$$
for all $\lambda\in\hfhd$. 
\end{theorem}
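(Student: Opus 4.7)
The plan is to imitate the proof of the Deformed BGGH-reciprocity above, inserting an additional preliminary step that transports the Verma flag of $P_A^\CJ(\mu)$ through the functor $\ol{\cdot}$. First I would establish that $\rP_A^\CJ(\mu)=\ol{P_A^\CJ(\mu)}$ admits a restricted Verma flag. Fix a Verma flag $0=M_0\subset M_1\subset\cdots\subset M_n=P_A^\CJ(\mu)$ with $M_i/M_{i-1}\cong\Delta_A(\lambda_i)$, supplied by the deformed BGGH-reciprocity. Applying $\ol{\cdot}$ yields right-exact sequences with cokernels $\ol{\Delta_A(\lambda_i)}=\rDelta_A(\lambda_i)$, so the main technical point is that the induced maps $\ol{M_{i-1}}\to\ol{M_i}$ remain injective, equivalently that $M_{i-1}\cap M_i^\prime=M_{i-1}^\prime$, where the primes denote the submodules used in the construction of $\ol{\cdot}$. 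I would verify this by a weight-space argument based on the fact that $M_i^\prime$ is generated by the images of the maps $T^{-n}\tau^{M_i}\colon T^{-n}M_i\to M_i$ for $\tau\in\CA_n$, $n\ne 0$, reducing inductively along the Verma filtration to the well-understood behaviour of $\ol{\cdot}$ on individual Verma modules, or invoke directly the Verma-acyclicity of $\ol{\cdot}$ developed in \cite{AF08,AF09}.

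Second, I would reduce to the case $A=\DK$ by base change, exactly as in the proof of the preceding theorem. Since $\rP_A^\CJ(\mu)$ has unique simple quotient $L_A(\mu)$, the module $\rP_A^\CJ(\mu)\otimes_A\DK$ is indecomposable, and it remains projective in $\rCO_\DK^\CJ$ by the standard base-change adjunction. Hence $\rP_A^\CJ(\mu)\otimes_A\DK\cong\rP_\DK^\CJ(\mu)$. Combined with $\rDelta_A(\lambda)\otimes_A\DK\cong\rDelta_\DK(\lambda)$, which follows from the compatibility of $\ol{\cdot}$ with base change, the restricted Verma multiplicities are preserved, so it suffices to prove the statement for $A=\DK$.

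Third, for $A=\DK$ I would invoke the non-deformed restricted BGGH-reciprocity established in \cite{AF08}: for any object $\rP$ admitting a restricted Verma flag one has $(\rP:\rDelta_\DK(\lambda))=\dim_\DK\Hom(\rP,\rnabla_\DK(\lambda))$, using that $\Hom(\rDelta_\DK(\nu),\rnabla_\DK(\lambda))$ equals $\DK$ for $\nu=\lambda$ and vanishes otherwise, together with the $\Ext^1$-vanishing between restricted Vermas and restricted dual Vermas. Applied to $\rP=\rP_\DK^\CJ(\mu)$, the projective cover property identifies $\dim\Hom(\rP_\DK^\CJ(\mu),\rnabla_\DK(\lambda))$ with the composition multiplicity $[\rnabla_\DK(\lambda):L_\DK(\mu)]$; the case $\lambda\notin\CJ$ is automatic from the support condition on $\rP_\DK^\CJ(\mu)$.

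The principal obstacle is the first step: confirming that the merely right exact functor $\ol{\cdot}$ preserves the Verma filtration of $P_A^\CJ(\mu)$. This hinges on a Verma-acyclicity property of $\ol{\cdot}$ that forms part of the technical core of the companion papers; without it even the statement that $\rP_A^\CJ(\mu)$ admits a restricted Verma flag would be in doubt, and so the entire reduction to the field case collapses.
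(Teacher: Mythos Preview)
The paper does not actually prove this theorem here; it simply records that ``The proof can be found in \cite{AF09}.'' So there is no in-paper argument to compare your proposal against. Your three-step outline (transport the Verma flag of $P_A^{\CJ}(\mu)$ through $\ol{\cdot}$, reduce by base change to the residue field, then apply the Hom-pairing with restricted dual Vermas in the field case) is exactly the natural strategy and is in line with how the companion papers proceed; you have also correctly isolated the genuine technical input, namely the Verma-acyclicity of $\ol{\cdot}$, as the point where one must invoke \cite{AF08,AF09}.

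One small caution on your second step: the assertions that $\rP_A^{\CJ}(\mu)\otimes_A\DK$ lies in $\rCO_\DK^{\CJ}$ and is projective there, and that $\rDelta_A(\lambda)\otimes_A\DK\cong\rDelta_\DK(\lambda)$, both require knowing that the formation of the graded center and of the functor $\ol{\cdot}$ are compatible with base change $A\to\DK$ (equivalently, that restriction of scalars sends $\rCO_\DK$ into $\rCO_A$ and that $\ol{\cdot}$ commutes with $\cdot\otimes_A\DK$ on Verma-flag objects). These are true, but they are not quite ``standard adjunction''; they are part of the same package of compatibilities established in \cite{AF08,AF09} alongside the Verma-acyclicity you already flagged. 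Once those are in hand, your reduction goes through verbatim as in the non-restricted case.
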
 
The proof can be found in \cite{AF09}.

\subsection{The equivalence relation}

Let us define the equivalence relation $\sim_A^\prime$ on $\hfhd$ as generated by $\lambda\sim_A^\prime\mu$ if $[\Delta_\DK(\lambda):L_\DK(\mu)]\ne 0$. 
\begin{lemma} We have $\sim_A^\prime=\sim_A$. 
\end{lemma}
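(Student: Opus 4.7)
The strategy is to use the deformed BGGH-reciprocity as a bridge between the two relations. The key preliminary identity is the standard fact
\[
[\Delta_\DK(\lambda):L_\DK(\mu)] = [\nabla_\DK(\lambda):L_\DK(\mu)]
\]
for any $\lambda,\mu\in\hfhd$, which holds because the usual duality on $\hCO_\DK$ interchanges $\Delta_\DK(\lambda)$ and $\nabla_\DK(\lambda)$ while preserving each simple object. Combined with BGGH, this means that for any open, locally bounded $\CJ$ containing both $\lambda$ and $\mu$,
\[
(P_A^\CJ(\mu):\Delta_A(\lambda))\ne 0\iff [\Delta_\DK(\lambda):L_\DK(\mu)]\ne 0.
\]

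For the inclusion $\sim_A^\prime\subseteq\sim_A$, suppose $[\Delta_\DK(\lambda):L_\DK(\mu)]\ne 0$. Choose any open, locally bounded $\CJ\subset\hfhd$ containing $\lambda$ and $\mu$. The identity above shows that $\Delta_A(\lambda)$ appears in a Verma flag of $P_A^\CJ(\mu)$. Because the canonical surjection $\Delta_A(\lambda)\sur L_A(\lambda)$ exhibits $L_A(\lambda)$ as a quotient (hence as a subquotient) of this Verma, it is a subquotient of $P_A^\CJ(\mu)$. Therefore $[P_A^\CJ(\mu):L_A(\lambda)]\ne 0$ and $\lambda\sim_A\mu$. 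Since every generating relation for $\sim_A^\prime$ already yields a relation for $\sim_A$, we are done in this direction.

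For the reverse inclusion $\sim_A\subseteq\sim_A^\prime$, suppose $[P_A^\CJ(\mu):L_A(\lambda)]\ne 0$, so $L_A(\lambda)$ occurs as a subquotient. Fix a Verma filtration $0=M_0\subset M_1\subset\dots\subset M_n=P_A^\CJ(\mu)$ with $M_i/M_{i-1}\cong\Delta_A(\nu_i)$. A short induction on $i$, using that for every short exact sequence $0\to N^\prime\to N\to N^{\prime\prime}\to 0$ any simple subquotient of $N$ is a subquotient of either $N^\prime$ or $N^{\prime\prime}$, produces an index $i$ such that $L_A(\lambda)$ is a subquotient of $\Delta_A(\nu_i)$. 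Setting $\nu:=\nu_i$, we now have simultaneously $(P_A^\CJ(\mu):\Delta_A(\nu))\ne 0$ and $[\Delta_A(\nu):L_A(\lambda)]\ne 0$. Reducing modulo $\fm$ via the base change isomorphisms $\Delta_A(\nu)\otimes_A\DK\cong\Delta_\DK(\nu)$ and $L_A(\lambda)\otimes_A\DK\cong L_\DK(\lambda)$, the second multiplicity translates into $[\Delta_\DK(\nu):L_\DK(\lambda)]\ne 0$, so $\nu\sim_A^\prime\lambda$. The first combined with BGGH and the self-duality identity yields $[\Delta_\DK(\nu):L_\DK(\mu)]\ne 0$, i.e.~$\nu\sim_A^\prime\mu$. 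Chaining gives $\lambda\sim_A^\prime\nu\sim_A^\prime\mu$ as required.

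The only genuinely delicate step is the descent of subquotient multiplicities from $\hCO_A$ to $\hCO_\DK$ in the second direction, since a priori an $A$-linear subquotient $N_2/N_1\cong L_A(\lambda)$ of $\Delta_A(\nu)$ need not remain non-zero after applying $-\otimes_A\DK$. This is handled either by the weight-space reduction (each weight space is a free $A$-module and $L_A(\lambda)$ is supported on a non-empty set of weights of $\Delta_A(\nu)$, which persists under base change) or, more cleanly, by observing that one may from the start work inside $P_A^\CJ(\mu)\otimes_A\DK\cong P_\DK^\CJ(\mu)$ and run the whole refinement argument over the field $\DK$; the existence of $\nu$ then pops out directly from the Jordan--H\"older filtration of the $\DK$-linear module together with its Verma flag.
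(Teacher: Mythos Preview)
Your strategy mirrors the paper's: both use BGGH-reciprocity together with $[\Delta_\DK(\lambda):L_\DK(\mu)]=[\nabla_\DK(\lambda):L_\DK(\mu)]$ as the bridge, and both prove the hard inclusion $\sim_A\subseteq\sim_A^\prime$ by locating an intermediate $\nu$ coming from the Verma flag. The only substantive difference is that the paper phrases everything in terms of homomorphisms (a non-zero $P_A^{\CJ}(\mu)\to P_A^{\CJ}(\lambda)$ must hit some Verma subquotient $\Delta_A(\nu)$) while you phrase it in terms of subquotients.

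Where your write-up is not yet complete is precisely the point you flag at the end, and neither of your two proposed fixes actually closes it. For the first fix, freeness of weight spaces alone does \emph{not} force a subquotient $L_A(\lambda)$ of $\Delta_A(\nu)$ to survive in $\Delta_\DK(\nu)$: already inside a free $A$-module of rank one it is easy to produce submodules $N_1\subsetneq N_2$ with $N_2/N_1\cong\DK$ whose images modulo $\fm$ coincide. For the second fix, passing to $P_A^{\CJ}(\mu)\otimes_A\DK\cong P_\DK^{\CJ}(\mu)$ presupposes that $[P_A^{\CJ}(\mu):L_A(\lambda)]\ne 0$ implies $[P_\DK^{\CJ}(\mu):L_\DK(\lambda)]\ne 0$, which is the very same descent problem in disguise. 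The clean way out---and this is what the paper's Hom formulation implicitly buys---is to translate ``$L_A(\lambda)$ is a subquotient of $M$'' into ``$\Hom(P_A^{\CJ^\prime}(\lambda),M)\ne 0$'' for suitable $\CJ^\prime$. Since $P_A^{\CJ^\prime}(\lambda)$ is a direct summand of $Q_A^{\CJ^\prime}(\lambda)$ and the latter represents $M\mapsto M_\lambda$, this Hom-space is a direct summand of a finitely generated free $A$-module, hence itself free over the local ring $A$; a non-zero free module remains non-zero after $-\otimes_A\DK$, and base change for Hom out of a projective then gives the non-vanishing over $\DK$. Once you insert this argument, your proof and the paper's become essentially identical.
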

\begin{proof}
Since the characters of $\Delta_\DK(\lambda)$ and $\nabla_\DK(\lambda)$ coincide, the BGGH-reciprocity implies that we have $\sim_A^\prime=\sim_A^{\prime\prime}$, where $\sim_A^{\prime\prime}$ is generated by $\lambda\sim_A^{\prime\prime}\mu$ if there exists some $\CJ$ with $(P_A^{\CJ}(\lambda):\Delta_A(\mu))\ne 0$. Hence we have to show that $\sim_A=\sim_A^{\prime\prime}$. It is clear that $\lambda\sim_A^{\prime\prime}\mu$ implies $\lambda\sim_A\mu$. So let us suppose that $[P_A^{\CJ}(\lambda):L_A(\mu)]\ne 0$. Then there is a non-zero homomorphism $P_A^{\CJ}(\mu)\to P_A^{\CJ}(\lambda)$. So there must be a Verma subquotient of $P_A^{\CJ}(\lambda)$ that admits a non-zero homomorphism from $P_A^{\CJ}(\mu)$, so if its highest weight is $\nu$, then $\nu\sim_A^{\prime\prime}\lambda$ and there is a non-zero homomorphism $P_A^{\CJ}(\mu)\to\Delta_A(\nu)$. This implies that $[\Delta_\DK(\nu):L_\DK(\mu)]=[\nabla_\DK(\nu):L_\DK(\mu)]\ne 0$, hence $\mu\sim_A^{\prime\prime}\nu$, again by BGGH-reciprocity. So $\lambda\sim_A^{\prime\prime}\mu$. 
\end{proof}

Moreover, the restricted version of the above statement holds as well: using analogous arguments (in particular, using the restricted BGGH-reciprocity) one can prove that $\ol\sim_A$ is also generated by $\lambda\ol\sim_A \mu$ if $[\rDelta_\DK(\lambda):L_\DK(\mu)]\ne 0$. Note, however, that $\ol\sim_A$ is a finer relation than $\sim_A$, i.e.~$\lambda\ol\sim_A\mu$ implies $\lambda\sim_A\mu$.

\section{The linkage principle}

In some sense, the results of the previous section are quite abstract and do not give us enough information about the structure of category $\hCO$. The  next step is to prove the {\em linkage principles}, i.e.~to determine the equivalence classes with respect to $\sim_A$ and $\ol\sim_A$. In the non-restricted case, the linkage principle follows from our results above together with a theorem of Kac and Kazhdan.

\subsection{The theorem of Kac and Kazhdan}
Let $A$ be a local deformation algebra with residue field $\DK$. As before, we consider $\tau$ as an element in $\hfhd_A=\Hom_A(\hfh\otimes_\DC A,A)=\hfhd\otimes_\DC A$. Let $(\cdot,\cdot)_A\colon \hfhd_A\times\hfhd_A\to A$ be the $A$-bilinear extension of the bilinear form $(\cdot,\cdot)\colon\hfhd\times\hfhd\to\DC$ that is induced by the usual non-degenerate, invariant bilinear form on $\hfg$ (cf.~\cite{Kac}).
Now 
let us consider the partial order $\uparrow_A$ on $\hfhd$ that is generated by $\mu\uparrow_A\lambda$ if there exists a positive root $\beta\in\hR^+$ and some $n\in\DN$ such that $2(\lambda+\tau+\rho,\beta)_\DK=n(\beta,\beta)_\DK$ and $\mu=\lambda-n\beta$. 
\begin{theorem}[\cite{KK79}] We have $[\Delta_\DK(\lambda):L_\DK(\mu)]\ne 0$ if and only if $\mu\uparrow_A \lambda$. 
\end{theorem}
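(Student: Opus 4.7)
The plan is to follow the Shapovalov-form strategy of Kac and Kazhdan, ported to the deformed setting. On $\Delta_\DK(\lambda)$ there is a canonical contravariant $\DK$-bilinear form $S_\lambda$ whose radical is the unique maximal proper submodule. The key ingredient is the Shapovalov determinant formula: on the weight space $\Delta_\DK(\lambda)_{\lambda-\eta}$, this form has determinant equal, up to a nonzero scalar in $\DK$, to
$$
\prod_{\beta \in \hR^+}\prod_{n \geq 1}\bigl(2(\lambda+\tau+\rho,\beta)_\DK - n(\beta,\beta)_\DK\bigr)^{P(\eta-n\beta)},
$$
where $P$ is the Kostant partition function weighted by root-space multiplicities. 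An elementary step $\mu \uparrow_A \lambda$ with $\mu = \lambda - n\beta$ is precisely the vanishing of one of these factors.

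For the ``if'' direction, whenever the Kac--Kazhdan condition $2(\lambda+\tau+\rho,\beta)_\DK = n(\beta,\beta)_\DK$ holds for $\beta \in \hR^+$, $n \in \DN$, one extracts an explicit singular vector of weight $\lambda - n\beta$ in $\Delta_\DK(\lambda)$ (Shapovalov's explicit formula for real $\beta$; Sugawara-type central elements in a suitable completion of $U(\hfg)$ for imaginary $\beta$). Such a vector yields an embedding $\Delta_\DK(\lambda - n\beta) \hookrightarrow \Delta_\DK(\lambda)$. Iterating along a chain witnessing $\mu \uparrow_A \lambda$ produces a nonzero map $\Delta_\DK(\mu) \to \Delta_\DK(\lambda)$, which forces $[\Delta_\DK(\lambda) : L_\DK(\mu)] \geq 1$.

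For the ``only if'' direction the tool is the Jantzen filtration: deform $\lambda$ along a generic one-parameter family in $\hfhd_A$ and filter $\Delta_\DK(\lambda) = \Delta^0 \supset \Delta^1 \supset \cdots$ by orders of vanishing of $S_\lambda$ along this family. The determinant factorization above then yields, by the standard Jantzen calculation, the sum formula
$$
\sum_{i \geq 1}\cha \Delta^i = \sum_{(\beta,n)} \cha \Delta_\DK(\lambda - n\beta),
$$
summed over those $(\beta,n) \in \hR^+ \times \DN$ for which $\lambda - n\beta \uparrow_A \lambda$ is an elementary step. Now argue by induction on the height of $\lambda - \mu$ in $\DZ_{\geq 0}\hR^+$: if $L_\DK(\mu)$ occurs as a subquotient of $\Delta_\DK(\lambda)$ with $\mu \ne \lambda$, then it lies in some $\Delta^i$ with $i \geq 1$; comparing composition-factor multiplicities on the two sides forces it to occur in some $\Delta_\DK(\lambda - n\beta)$ appearing on the right, and the induction hypothesis together with transitivity of $\uparrow_A$ concludes.

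The main obstacle is the imaginary-root case. For $\beta = m\delta$ the form factor $(\beta,\beta)_\DK$ vanishes, so the Shapovalov factor $2(\lambda+\tau+\rho,m\delta)_\DK$ is independent of $n$; when $\lambda$ sits at the critical level it vanishes simultaneously for every $n \in \DN$, producing the infinite family of embeddings $\Delta_\DK(\lambda - mn\delta) \hookrightarrow \Delta_\DK(\lambda)$ that accounts for the rich linkage at the critical level. The explicit construction of these singular vectors uses central Sugawara-type elements in a completion of $U(\hfg)$, and the right-hand side of the Jantzen sum formula then becomes infinite, requiring careful bookkeeping. Since the statement is cited as the theorem of \cite{KK79}, I would import the singular-vector construction and the determinant formula from that paper rather than rederive them here, contenting myself with observing that the deformation parameter $\tau$ enters only through the shift $\lambda \mapsto \lambda + \tau$ in the Shapovalov factors.
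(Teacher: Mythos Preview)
The paper does not prove this theorem; it is quoted as a result of Kac and Kazhdan with a citation to \cite{KK79} and no argument given. So there is no ``paper's own proof'' to compare your proposal against.

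Your sketch is a fair outline of the standard Shapovalov--Jantzen route to the Kac--Kazhdan theorem and would be appropriate if one were asked to supply a proof. A couple of remarks on accuracy: the original paper \cite{KK79} does not use Sugawara-type central elements for the imaginary-root singular vectors; rather, the existence of singular vectors at the predicted weights is extracted from the determinant formula itself via a deformation/limit argument (one perturbs to a nearby weight where the relevant factor is the \emph{only} vanishing one, obtains a singular vector there, and passes to the limit). Also, in the Jantzen sum formula at the critical level the right-hand side is genuinely infinite, and one has to be slightly more careful than ``bookkeeping'': the correct statement compares coefficients weight-by-weight, where only finitely many terms contribute to any fixed $\mu$-coefficient, so the inductive argument on $\lambda-\mu$ still goes through. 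With those adjustments your outline is sound.
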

In particular, the equivalence relation $\sim_A$ is generated by the partial order $\uparrow_A$. The following lemma is immediate from the above theorem.

\begin{lemma}
If $\lambda\sim_A\mu$, then $
\{\alpha\in \hR\mid 2(\lambda+\tau+\rho,\alpha)_\DK\in\DZ(\alpha,\alpha)_\DK\}=\{\alpha\in \hR\mid 2(\mu+\tau+\rho,\alpha)_\DK\in\DZ(\alpha,\alpha)_\DK\}$.
\end{lemma}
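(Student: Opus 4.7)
The plan is to invoke the preceding theorem of Kac and Kazhdan, by which $\sim_A$ is the equivalence relation generated by the partial order $\uparrow_A$. Since both the hypothesis and the desired conclusion are symmetric in $\lambda$ and $\mu$, a straightforward induction on the length of a zig-zag connecting $\lambda$ and $\mu$ reduces the problem to a single step $\mu\uparrow_A \lambda$: we may assume there exist $\beta\in\hR^+$ and $n\in\DN$ such that $\mu=\lambda-n\beta$ and $2(\lambda+\tau+\rho,\beta)_\DK=n(\beta,\beta)_\DK$, and we must show that a given $\alpha\in\hR$ lies in the set for $\lambda$ if and only if it lies in the set for $\mu$.

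From $\mu = \lambda - n\beta$ and the $A$-bilinearity of $(\cdot,\cdot)_\DK$ one obtains
\begin{equation*}
2(\mu+\tau+\rho,\alpha)_\DK = 2(\lambda+\tau+\rho,\alpha)_\DK - 2n(\beta,\alpha)_\DK,
\end{equation*}
so the desired equality of the two sets reduces to the assertion that $2n(\beta,\alpha)_\DK\in\DZ(\alpha,\alpha)_\DK$ for every $\alpha\in\hR$.

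This last containment I would verify by a case analysis on whether $\alpha$ and $\beta$ are real or imaginary. Recall that in the affine Kac--Moody setup the invariant form on $\hfhd$ pairs any imaginary root with every other root to zero, and the imaginary roots are precisely those $\alpha\in\hR$ with $(\alpha,\alpha)=0$. Hence if at least one of $\alpha,\beta$ is imaginary, then $(\beta,\alpha)_\DK=0$, and so $2n(\beta,\alpha)_\DK=0\in\DZ(\alpha,\alpha)_\DK$. If instead both $\alpha$ and $\beta$ are real, then $\langle\beta,\alpha^\vee\rangle=2(\beta,\alpha)/(\alpha,\alpha)$ is an integer (a Cartan-matrix coefficient of the affine root system of $\hfg$), so $2(\beta,\alpha)\in\DZ(\alpha,\alpha)$ already in $\DC$, and multiplication by $n$ followed by the structure map $\DC\to A\to\DK$ transfers the containment to $\DK$.

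I anticipate no serious obstacle. The only mild subtlety is the separate treatment of imaginary roots, where both sides of the asserted containment vanish for trivial reasons; the real-real case is immediate from the standard integrality of the affine Cartan matrix.
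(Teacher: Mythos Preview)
Your argument is correct and is precisely the unpacking of what the paper declares ``immediate from the above theorem'': reduce via Kac--Kazhdan to a single step $\mu=\lambda-n\beta$ and observe that the defect $2n(\beta,\alpha)_\DK$ lies in $\DZ(\alpha,\alpha)_\DK$ by the real/imaginary case split you give. There is nothing to add.
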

Hence any equivalence class $\Lambda\in\hfhd$ defines
\begin{align*}
\hR_A(\Lambda)&:=\{\alpha\in \hR\mid 2(\lambda+\tau+\rho,\alpha)_\DK\in\DZ(\alpha,\alpha)_\DK\text{ for some $\lambda\in\Lambda$}\},\\
&=\{\alpha\in \hR\mid 2(\lambda+\tau+\rho,\alpha)_\DK\in\DZ(\alpha,\alpha)_\DK\text{ for all $\lambda\in\Lambda$}\}.
\end{align*}
We also define
$$
\hCW_A(\Lambda):=\langle s_{\alpha+n\delta}\mid \alpha+n\delta\in \hR^{\re}\cap\hR_A(\Lambda)\rangle.
$$

Clearly, the elements in a fixed equivalence class $\Lambda$ have the same level, so we can talk about the level of an equivalence class. Note that an equivalence class if of critical level if and only if $\delta\in\hR_A(\Lambda)$, i.e.~ if $(\lambda+\rho,\delta)=(\delta,\delta)=0$ for all $\lambda\in\Lambda$. This is the case if and only if $n\delta\in\hR_A(\Lambda)$ for all $n\ne 0$. In this case, $\hCW_A(\Lambda)$ is an affine Weyl group isomorphic to the affinization of its finite analogue $\CW_A(\Lambda)$ that is generated by the reflections $s_\alpha$ for all finite roots $\alpha$ in $\hR_A(\Lambda)$.

The Kac--Kazhdan theorem now immediately implies the following. 
\begin{theorem}[The non-restricted linkage principle] Let $\Lambda\subset\hfhd$ be an equivalence class with respect to $\sim_A$. 
\begin{enumerate}
\item  If $\Lambda$ is non-critical, then $\Lambda$ is a $\hCW_A(\Lambda)$-orbit in $\hfhd$.
\item If $\Lambda$ is critical, then $\Lambda$ is an orbit under $\hCW_A(\Lambda)\times\DZ\delta$. 
\end{enumerate}
\end{theorem}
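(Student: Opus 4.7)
The plan is to derive both parts from the Kac--Kazhdan theorem above and the preceding lemma. By that lemma $\sim_A = \sim_A'$, so Kac--Kazhdan identifies $\sim_A$ with the equivalence closure of the partial order $\uparrow_A$; it therefore suffices to describe the $\uparrow_A$-equivalence classes. I would do this by analyzing an elementary step $\mu = \lambda - n\beta$ by cases on the root $\beta \in \hR^+$.

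When $\beta$ is a real positive root, $(\beta,\beta)_\DK \ne 0$, so the Kac--Kazhdan identity $2(\lambda + \tau + \rho,\beta)_\DK = n(\beta,\beta)_\DK$ is equivalent to $n = \langle \lambda + \tau + \rho, \beta^\vee \rangle$. The integrality of the right-hand side is exactly the defining condition $\beta \in \hR_A(\Lambda)$, so $s_\beta$ lies in $\hCW_A(\Lambda)$. A direct computation with the reflection formula then shows $\mu + \tau + \rho = s_\beta(\lambda + \tau + \rho)$, meaning that $\mu$ is obtained from $\lambda$ by the reflection $s_\beta$ acting in the $\tau$-shifted dot action
\[
w \cdot \lambda := w(\lambda + \tau + \rho) - \tau - \rho.
\]
When $\beta = m\delta$ is imaginary, $(\beta,\beta)_\DK = 0$ forces $(\lambda + \tau + \rho,\delta)_\DK = 0$, i.e.~$\Lambda$ is of critical level; conversely, in that case every $\mu = \lambda - mn\delta$ with $m,n \in \DN$ is an admissible step, so the equivalence closure contains all $\DZ\delta$-translates of $\lambda$.

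Combining the two cases, and invoking the preceding lemma to ensure that $\hR_A(\Lambda)$, and hence $\hCW_A(\Lambda)$, is constant along $\Lambda$, the equivalence class of any $\lambda$ is contained in $\hCW_A(\Lambda) \cdot \lambda$, augmented by $\DZ\delta$-translates in the critical case. The reverse inclusion is immediate: by definition $\hCW_A(\Lambda)$ is generated by the reflections $s_\beta$ with $\beta \in \hR^{\re} \cap \hR_A(\Lambda)$, each of which was shown to realize an elementary $\uparrow_A$-step (up to swapping the roles of $\lambda$ and $\mu$, which is allowed in the equivalence closure), and the imaginary-root analysis supplies the $\DZ\delta$-factor in the critical case.

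The main technical point is the identification of the Kac--Kazhdan arithmetic condition with a single reflection of the shifted dot action; this amounts to a short calculation, but one must track the deformation $\tau$ carefully throughout, since in general $\tau$ is not fixed by $\hCW$. With that translation in place, the real/imaginary dichotomy cleanly produces the two cases of the theorem and explains the extra $\DZ\delta$-factor at the critical level.
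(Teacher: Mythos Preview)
Your proposal is correct and matches the paper's approach: the paper states this theorem as an immediate consequence of the Kac--Kazhdan theorem and gives no further argument, and your case analysis on real versus imaginary roots is exactly how one unpacks that implication. One small remark: the orbit in the theorem is indeed for the $\tau$-shifted dot action you introduce, not the undeformed one defined earlier in the paper; the paper leaves this implicit, but your formulation makes it explicit and is the right one in the deformed setting.
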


\subsection{Base change}
Now we explain one of the main reasons for the use of  the deformation theory in our approach.  Let us look at the special case $A=\tS=S(\fh)_{(0)}$, the localization of the symmetric algebra $S(\fh)$ at the maximal ideal generated by $\fh\subset S(\fh)$. For any prime ideal $\fp$ of $\tS$ we denote by $\tS_\fp$ the localization of $\tS$ at $\fp$ and by $\DK_\fp$ the residue field of $\tS_\fp$. Then $\tS$ is the intersection of $\tS_\fp$ inside the quotient field $\tQ$ of $\tS$ for all prime ideals of height one.

\begin{proposition} \label{prop-intequ} The equivalence relation $\ol\sim_{\tS}$ is the finest relation on $\hfhd$ that is coarser than $\ol\sim_{\tS_\fp}$ for all prime ideals $\fp$ of $\tS$ of height one.
 \end{proposition}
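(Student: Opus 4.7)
The plan is to describe $\ol\sim_{\tS}$ and each $\ol\sim_{\tS_\fp}$ via a common Kac--Kazhdan-type reflection rule, and then reduce the proposition to a short commutative algebra observation about the regular local ring $\tS$.

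First I would invoke the lemma just proved in its restricted version: for any local deformation algebra $A$ with residue field $\DK$, the relation $\ol\sim_A$ is generated by pairs $(\lambda,\mu)$ with $[\rDelta_\DK(\lambda):L_\DK(\mu)]\ne 0$. Combining this with the restricted analogue of the Kac--Kazhdan theorem (obtained by adapting \cite{KK79}, using that the imaginary-reflection subquotients of $\Delta_\DK(\lambda)$ are quotiented out on passage to $\rDelta_\DK(\lambda)$), one rewrites $\ol\sim_A$ as the equivalence relation generated by pairs $(\lambda,\lambda-n\beta)$ with $\beta\in\hR^{\re}\cap\hR^+$, $n\in\DN$, and
\[
E_{\beta,n,\lambda}:=2(\lambda+\tau+\rho,\beta)_A-n(\beta,\beta)_A\in\fm_A,
\]
where $\fm_A$ denotes the maximal ideal of $A$. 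As an element of $\tS$ itself, $E_{\beta,n,\lambda}$ decomposes as a complex scalar $c:=2(\lambda+\rho,\beta)-n(\beta,\beta)$ plus the term $2(\tau,\beta)\in\tS$, which is a non-zero linear polynomial lying in $\fm_\tS=\fh\cdot\tS$ (since $\beta$ has non-trivial $\fh$-component). In particular $E_{\beta,n,\lambda}$ is always a non-zero element of $\tS$.

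With this common reformulation in hand, the proposition reduces to the assertion
\[
E_{\beta,n,\lambda}\in\fm_\tS\iff E_{\beta,n,\lambda}\in\fp\text{ for some height-one prime }\fp\subset\tS,
\]
since the maximal ideal of $\tS_\fp$ equals $\fp\tS_\fp$ and membership there is equivalent to membership in $\fp$. This last assertion is an elementary property of the regular local ring $\tS$: being a UFD, any non-zero non-unit element of $\tS$ has an irreducible factor, which generates a height-one prime containing the element, while conversely every height-one prime is contained in the unique maximal ideal. Hence the single-step generating conditions of $\ol\sim_{\tS}$ and of $\bigcup_\fp \ol\sim_{\tS_\fp}$ coincide, and so do the equivalence relations they generate---which is exactly the claim of the proposition.

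The main obstacle is establishing the restricted analogue of the Kac--Kazhdan theorem that underlies the initial reformulation; this is the substantial representation-theoretic input. Once it is available, the remainder of the argument is the one-line UFD observation above, together with identifying the two residue-field reduction maps $\tS\to\DC$ and $\tS_\fp\to\DK_\fp$ with the vanishing conditions for $E_{\beta,n,\lambda}$.
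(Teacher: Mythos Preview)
Your argument has a genuine circularity problem. The ``restricted analogue of the Kac--Kazhdan theorem'' that you invoke as the main input is precisely what the paper does \emph{not} have at this point; indeed, the paragraph introducing this proposition says explicitly that ``in the restricted case, we do not yet have such an explicit description of the highest weights of simple subquotients of a given Verma module as we have, by the Kac--Kazhdan theorem, in the non-restricted case.'' The whole purpose of the proposition in the paper's logic is to \emph{avoid} needing a restricted Kac--Kazhdan statement over the residue field $\DC$ of $\tS$: one first reduces $\ol\sim_{\tS}$ to the height-one relations $\ol\sim_{\tS_\fp}$ by the present proposition, then computes those height-one relations directly (Theorem~\ref{theorem-subgen}), and only then obtains the restricted linkage principle. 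Your proposed route assumes essentially the restricted linkage principle (in the sharper form of a multiplicity formula) in order to prove one of its ingredients.

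Note also that your suggested derivation of the restricted Kac--Kazhdan rule---``the imaginary-reflection subquotients of $\Delta_\DK(\lambda)$ are quotiented out on passage to $\rDelta_\DK(\lambda)$''---is not a proof: passing to the quotient $\rDelta_\DK(\lambda)$ can alter Jordan--H\"older multiplicities in ways that are not captured simply by discarding the $n\delta$ terms from the Kac--Kazhdan order. The paper's own argument is structural rather than combinatorial: it uses that $\rP_{\tS}^{\CJ}(\mu)\otimes_{\tS}\tQ$ splits as a direct sum of Verma modules, that this splitting is compatible with each localization $\tS_\fp$ by definition of $\ol\sim_{\tS_\fp}$, and that $\tS=\bigcap_\fp \tS_\fp$ then forces a splitting of $\rP_{\tS}^{\CJ}(\mu)$ itself, contradicting its indecomposability unless the putative finer decomposition of $\Lambda$ is trivial. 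That argument needs no information about restricted Verma multiplicities.
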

\begin{proof} Recall that the equivalence relation $\sim_A$ is generated by $\lambda\sim_A \mu$ if $(\rP_A^\CJ(\lambda):\rDelta_A(\mu))\ne 0$ for some open, locally bounded set $\CJ$. If $A\to A^\prime$ is a homomorphism of deformation algebras, then $\rP_A^\CJ(\lambda)\otimes_A A^\prime$ is projective in $\rCO_{A^\prime}^\CJ$, hence splits into a direct sum of restricted projective covers. Hence $\lambda\ol\sim_{\tS_\fp}\mu$ implies $\lambda\ol\sim_{\tS}\mu$. Let $\ol\sim^\prime_{\tS}$ be the finest relation on $\hfhd$ that is coarser than $\ol\sim_{\tS_\fp}$ for all prime ideals $\fp$ of $\tS$ of height one. Let $\Lambda$ be an equivalence class with respect to $\ol\sim_{\tS}$. Then $\Lambda$ is a union of equivalence classes with respect to $\ol\sim^\prime_{\tS}$. Let us write this decompositon as $\Lambda=\bigcup_{i\in I} \Lambda_i$. 

Now $\rP_\tS^\CJ(\mu)\otimes_\tS \tQ$ splits into a direct sum of Verma modules in $\rCO_\tQ$, hence we have a {\em canonical} decomposition
$$
\rP_\tS^\CJ(\mu)\otimes_\tS \tQ=\bigoplus_{i\in I} P_i,
$$
where $P_i$ is the direct summand that contains all Verma modules with highest weight belonging to $\Lambda_i$. By our assumption on $\ol\sim^\prime_{\tS}$, this direct sum decomposition induces a direct sum decomposition of $\rP_\tS^\CJ(\mu)\otimes_{\tS}\tS_\fp$  for each prime ideal $\fp$ of height one, i.e.~
$$
\rP_\tS^\CJ(\mu)\otimes_{\tS}\tS_\fp=\bigoplus_{i\in I}\left( \rP_\tS^\CJ(\mu)\otimes_{\tS}\tS_\fp\cap P_i\right).
$$
After taking the intersection we get a direct sum decomposition
$$
\rP_\tS^\CJ(\mu)=\bigoplus_{i\in I}\left( \rP_\tS^\CJ(\mu)\cap P_i\right)
$$
and we deduce that only one direct summand on the right hand side appears, i.e.~that $\Lambda$ is already an equivalence class with respec to $\ol\sim^\prime_{\tS}$. So we have  $\ol\sim_\tS=\ol\sim^\prime_\tS$.
\end{proof}

The advantage now is that the equivalence relations $\sim_{\tS_\fp}$ can be  described explicitely.

 \subsection{The restricted linkage principle}
 
 In the restricted case, we do not yet have such an explicit description of the highest weights of simple subquotients of a given Verma module as we have, by the Kac--Kazhdan theorem, in the non-restricted case. Nevertheless, we can explicitely determine the equivalence relations $\ol\sim_{\tS_\fp}$ for each prime ideal $\fp$ of height one and then use Proposition \ref{prop-intequ}.
 
Let $\lambda\in\hfhd$ be a weight at critical level and define
$R(\lambda)=\{\alpha\in R\mid \langle\lambda,\alpha^\vee\rangle\in\DZ\}$ (note that this definition only refers to finite roots!). For any $\alpha\in R(\lambda)$ we denote by $\alpha\downarrow\lambda$ the maximal element in $\{s_{\alpha+n\delta}.\lambda\mid n\in\DZ\}$ that is smaller or equal to $\lambda$.
Here is our first result:

\begin{theorem}[\cite{AF08}]\label{theorem-subgen} Let $\lambda\in\hfhd$ be of critical level and let $\fp\subset \tS$ be a prime ideal of height one. 
\begin{enumerate}
\item If $\alpha^\vee\not\in\fp$ for all $\alpha\in R(\lambda)$, then $\Delta_{\DK_\fp}(\lambda)$ is simple, i.e.~
$$
[\Delta_{\DK_\fp}(\lambda):L_{\DK_\fp}(\mu)]=
\begin{cases}
1,&\text{ if $\lambda=\mu$},\\
0,&\text{ otherwise.}
\end{cases}
$$
In particular, $\lambda\sim_{\tS_\fp}\mu$ implies $\lambda=\mu$.
\item If $\alpha^\vee\in\fp$ for some $\alpha\in R(\lambda)$, then $\fp=\alpha^\vee\tS$ and we have
 $$
[\Delta_{\DK_\fp}(\lambda):L_{\DK_\fp}(\mu)]=
\begin{cases}
1,&\text{ if $\mu\in\{\lambda,\alpha\downarrow\lambda\}$},\\
0,&\text{ otherwise.}
\end{cases}
$$
In particular, the equivalence class of $\lambda$ with respect to $\sim_{\tS_\fp}$ is the orbit of $\lambda$ under the subgroup of $\hCW$ that is generated by the reflections $s_{\alpha+n\delta}$  with $n\in\DZ$.  
\end{enumerate}
\end{theorem}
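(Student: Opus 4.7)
My plan is to deduce both cases from the Kac--Kazhdan theorem by analysing the preorder $\uparrow_{\tS_\fp}$ explicitly in the critical situation. The key simplification is that at critical level $(\lambda+\rho,\delta)=\langle\lambda+\rho,K\rangle=0$ and, because $\tau$ vanishes on $\DC K\oplus\DC D$, also $(\tau,\delta)=0$. For a real affine root $\beta=\alpha+n\delta$ one therefore has $(\beta,\beta)=(\alpha,\alpha)$ and $(\lambda+\tau+\rho,\beta)=(\lambda+\tau+\rho,\alpha)$, so the elementary KK-equation $2(\lambda+\tau+\rho,\beta)_{\DK_\fp}=m(\beta,\beta)_{\DK_\fp}$ collapses to
\[
\langle\lambda+\rho,\alpha^\vee\rangle+\alpha^\vee=m\quad\text{in }\DK_\fp,
\]
where $\alpha^\vee$ on the left is read as an element of $\fh\subset S(\fh)\subset\tS$. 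In particular, the reduced condition depends only on the finite part $\alpha$ of $\beta$.

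The second input is the structure of the height-one primes of $\tS=S(\fh)_{(0)}$: this ring is regular local with maximal ideal $\tS\cdot\fh$, so every height-one prime is principal and generated by an irreducible element of $(\fh)\subset S(\fh)$. For $c\in\DC^\times$ the element $\alpha^\vee-c$ has nonzero constant term, hence is a unit in $\tS$; consequently the image of $\alpha^\vee$ in $\DK_\fp$ equals a complex number $c$ if and only if $c=0$ and $\fp=(\alpha^\vee)$. Feeding this back into the reduced equation, one finds that a solution $m\in\DN$ exists precisely when $\alpha\in R(\lambda)$, $\fp=(\alpha^\vee)$, and, after possibly replacing $\alpha$ by $-\alpha$, $m=\langle\lambda+\rho,\alpha^\vee\rangle\geq 1$. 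Moreover, distinct roots $\beta\ne\pm\alpha$ have non-proportional coroots, so $\fp=(\alpha^\vee)$ determines $\alpha$ up to sign.

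Part~(1) then follows at once: if no $\alpha^\vee$ with $\alpha\in R(\lambda)$ lies in $\fp$, then no positive real root produces an elementary step $\mu\uparrow_{\tS_\fp}\lambda$ with $\mu\ne\lambda$, so by Kac--Kazhdan $L_{\DK_\fp}(\lambda)$ is the unique composition factor. For part~(2) the permitted elementary steps are exactly
\[
\mu=\lambda-\langle\lambda+\rho,\alpha^\vee\rangle(\alpha+n\delta)=s_{\alpha+n\delta}.\lambda,\qquad n\in\DZ,
\]
where we have used $\langle\lambda+\rho,(\alpha+n\delta)^\vee\rangle=\langle\lambda+\rho,\alpha^\vee\rangle$ at critical level. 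These points differ pairwise by integer multiples of $\delta$, and $\alpha\downarrow\lambda$ is by definition the maximal one bounded above by $\lambda$. A short reflection computation gives $\langle\nu+\rho,\alpha^\vee\rangle=-\langle\lambda+\rho,\alpha^\vee\rangle$ for $\nu=s_{\alpha+n\delta}.\lambda$, so a further elementary step from $\nu$ along the same finite root would force $m<0$, ruling out any continuation of the chain.

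The main obstacle is passing from Kac--Kazhdan (which controls only which composition factors can appear) to the assertion that each multiplicity equals~$1$. The natural route is a Jantzen-filtration argument for $\Delta_{\tS_\fp}(\lambda)$ over the discrete valuation ring $\tS_\fp$: since only the single hyperplane $\alpha^\vee=0$ lies over $\fp$, the Jantzen sum formula collapses to a rank-one, $\fsl_2$-type calculation that pins both multiplicities. The orbit descriptions of $\sim_{\tS_\fp}$ then follow by inspection --- trivial in case~(1), and generated by the reflections $s_{\alpha+n\delta}$, $n\in\DZ$, in case~(2).
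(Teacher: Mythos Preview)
The paper does not supply its own proof of this theorem; it is quoted from \cite{AF08}. Nevertheless, your proposal has a genuine gap that makes the argument fail as written.

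You analyse the Kac--Kazhdan condition only for \emph{real} affine roots $\beta=\alpha+n\delta$. At critical level, however, the imaginary roots $\beta=n\delta$ with $n>0$ always satisfy the elementary equation: one has $(\beta,\beta)=0$, and $(\lambda+\tau+\rho,\delta)_{\DK_\fp}=0$ because $\langle\lambda+\rho,K\rangle=0$ (critical level) and $\tau(K)=0$ (the deformation factors through $S=S(\fh)$). Hence for the \emph{non-restricted} Verma module $\Delta_{\DK_\fp}(\lambda)$ there are always elementary steps $\lambda-mn\delta\uparrow_{\tS_\fp}\lambda$, and by Kac--Kazhdan every $L_{\DK_\fp}(\lambda-n\delta)$, $n>0$, occurs as a composition factor. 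In particular $\Delta_{\DK_\fp}(\lambda)$ is never simple in case~(1), so your argument proves a false statement about the ordinary Verma module.

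The resolution is that the theorem (despite the unbarred notation in this survey) is about the \emph{restricted} Verma modules $\rDelta_{\DK_\fp}(\lambda)=\ol{\Delta_{\DK_\fp}(\lambda)}$ and the restricted relation $\ol\sim_{\tS_\fp}$; this is precisely what the subsequent proof of the restricted linkage principle uses. Passing from $\Delta$ to $\rDelta$ means quotienting by the action of the non-trivially graded part of the center, and it is exactly this step that kills the imaginary-root composition factors. That passage is the substantive content of \cite{AF08}: one needs the Feigin--Frenkel description of the critical center and the resulting character formula for $\rDelta(\lambda)$ before any Kac--Kazhdan-style bookkeeping can begin. Your ``rank-one $\fsl_2$-type'' Jantzen picture is the right intuition for the multiplicity-one claim in case~(2), but it only becomes meaningful once the restricted character is in hand, and that ingredient is missing from your proposal.
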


We can now deduce the restricted linkage principle. Let $\ol\Lambda\subset\hfhd$ be a critical, restricted equivalence class and define 
$$
\hCW(\ol\Lambda):=\langle s_{\alpha+n\delta}\mid \text{ $\alpha\in R(\lambda)$ for some $\lambda\in\ol\Lambda$ and $n\in\DZ$}\rangle.
$$

\begin{theorem}[\cite{AF09}] Suppose that $\ol\Lambda$ is a restricted critical equivalence class. Then $\ol\Lambda$ is a $\hCW(\ol\Lambda)$-orbit in $\hfhd$.
\end{theorem}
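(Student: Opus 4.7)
The plan is to apply Proposition~\ref{prop-intequ} with the deformation algebra $\tS$, reducing the determination of the restricted classes to a prime-by-prime analysis of $\ol\sim_{\tS_\fp}$ as $\fp$ runs over the height-one primes of $\tS$; a specialisation argument (via the restriction functor $\rCO_\tS\to\rCO_\DC$ induced by the residue field of $\tS$) identifies the resulting classes with the $\ol\Lambda\subset\hfhd$ appearing in the theorem.

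For each height-one prime $\fp$ and each critical weight $\lambda$, Theorem~\ref{theorem-subgen} describes the non-restricted class precisely. In case~(1), where no coroot in $R(\lambda)$ lies in $\fp$, the Verma module $\Delta_{\DK_\fp}(\lambda)$ is already simple, hence $\rDelta_{\DK_\fp}(\lambda)=L_{\DK_\fp}(\lambda)$ and $\lambda$ has no non-trivial $\ol\sim_{\tS_\fp}$-partner. In case~(2), where $\fp=\alpha^\vee\tS$ for some $\alpha\in R(\lambda)$, the non-restricted class is the full orbit under $\langle s_{\alpha+n\delta}\mid n\in\DZ\rangle$. Using the dot action together with the critical-level identity $(\lambda+\rho,\delta)=0$, one computes $s_{\alpha+n\delta}.\lambda=\lambda-k(\alpha+n\delta)$ with $k=\langle\lambda+\rho,\alpha^\vee\rangle$, so this orbit contains weights differing by arbitrary integer multiples of $\delta$. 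The restricted class ought to be this same orbit, viewed as a subset of $\hfhd$ (not identified modulo $\DZ\delta$), and the main task is to verify both halves of this: that $\lambda$ is $\ol\sim_{\tS_\fp}$-equivalent to every $s_{\alpha+n\delta}.\lambda$, and that $\lambda$ is \emph{not} $\ol\sim_{\tS_\fp}$-equivalent to $\lambda+\delta$.

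The first assertion can be extracted from the Kac--Kazhdan structure of $\Delta_{\tS_\fp}(\lambda')$ at nearby highest weights $\lambda'=\lambda+m\delta$, once one checks that the restricted quotient retains the Verma subquotients labelled by $s_{\alpha+n\delta}.\lambda$. The second---separating $\lambda$ from $\lambda+\delta$---is the principal obstacle, and rests on the explicit presence of non-trivial elements in the critical graded centre $(\CA_{\crit})_n$ for $n\ne 0$ constructed in \cite{AF08}; such an element distinguishes weight spaces shifted by $\delta$ and survives to the restricted setting just enough to prevent any identification. Combining the per-prime analyses via Proposition~\ref{prop-intequ} then shows that $\lambda\ol\sim_\tS\mu$ iff $\mu$ is obtained from $\lambda$ by a finite sequence of reflections $s_{\alpha_i+n_i\delta}$ with $\alpha_i\in R(\lambda_i)$ for intermediate weights $\lambda_i\in\ol\Lambda$, which is exactly the $\hCW(\ol\Lambda)$-orbit; specialising to $\DC$ yields the theorem.
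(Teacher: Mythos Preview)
Your overall plan---reduce via Proposition~\ref{prop-intequ} to the height-one primes of $\tS$, then invoke Theorem~\ref{theorem-subgen} at each prime---is exactly the paper's argument. The confusion in your middle paragraphs comes from reading Theorem~\ref{theorem-subgen} literally as a statement about the ordinary Verma modules $\Delta_{\DK_\fp}(\lambda)$. It cannot be: at the critical level the imaginary roots $n\delta$ always satisfy the Kac--Kazhdan integrality condition over $\DK_\fp$ (since $\tau_\fp(K)=0$), so $\Delta_{\DK_\fp}(\lambda)$ is never simple and always carries $L_{\DK_\fp}(\lambda-n\delta)$ as a subquotient. The cited result from \cite{AF08} (whose very title concerns \emph{restricted} Verma modules) is really about $\rDelta_{\DK_\fp}(\lambda)$, and the ``In particular'' clauses should read $\ol\sim_{\tS_\fp}$ rather than $\sim_{\tS_\fp}$; the paper's own proof of the linkage theorem confirms this reading, since it applies Theorem~\ref{theorem-subgen} directly to describe the $\ol\sim_{\tS_\fp}$-classes.

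Once you accept the corrected statement, the two ``halves'' you isolate---that $\alpha\downarrow\lambda$ survives as a subquotient of $\rDelta_{\DK_\fp}(\lambda)$, and that the imaginary-root subquotients $L_{\DK_\fp}(\lambda-n\delta)$ do not---are precisely its content, and the paper simply cites it as a black box. Your attempt to re-derive these facts is therefore redundant here (though it correctly identifies where the real work in \cite{AF08} lies), and the sketches you give (``once one checks\ldots'', ``non-trivial elements in the critical graded centre'') are too vague to stand as proofs. There is also a small slip: from $s_{\alpha+n\delta}.\lambda=\lambda-k(\alpha+n\delta)$ with $k=\langle\lambda+\rho,\alpha^\vee\rangle$ you conclude the orbit contains weights differing by arbitrary integer multiples of $\delta$, but in fact only multiples of $k\delta$ occur, so whether $\lambda+\delta$ lies in the orbit depends on $k$. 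Finally, the specialisation from $\tS$ to $\DC$ is unnecessary: $\ol\sim_A$ is defined via Jordan--H\"older multiplicities over the residue field, and the residue field of $\tS$ is already $\DC$, so $\ol\sim_{\tS}=\ol\sim_\DC$ holds by definition.
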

\begin{proof} Let $\lambda\in\ol\Lambda$. Let $\fp\subset \tS$ be a prime ideal of height one. If $\alpha^\vee\in\fp$ for some $\alpha\in R(\lambda)$, then $\fp=\alpha^\vee\tS$ and Theorem \ref{theorem-subgen} implies that the $\ol\sim_{\tS_\fp}$-equivalence class of $\lambda$ is its $\langle s_{\alpha+n\delta}\mid n\in\DZ\rangle$-orbit. If $\alpha^\vee\not\in\fp$ for all $\alpha\in R(\lambda)$, then $\lambda$ forms an $\ol\sim_{\tS_\fp}$-equivalence class by itself, again by Theorem \ref{theorem-subgen}. As by Proposition \ref{prop-intequ} the relation $\ol\sim_{\tS}$ is generated by the relations $\ol\sim_{\tS_\fp}$, the equivalence class of $\lambda$ is its $\hCW(\ol\Lambda)$-orbit. 
\end{proof}

\end{document}